\crefname{section}{Section}{Sections}
\crefname{subsection}{\S}{\S\S}
\crefname{subsubsection}{\S}{\S\S}
\theoremstyle{plain}
\newtheorem{lemma}{Lemma}[section]
\newtheorem{proposition}[lemma]{Proposition}
\newtheorem{theorem}[lemma]{Theorem}
\newtheorem{fact}[lemma]{Fact}
\theoremstyle{nonumberplain}
\newtheorem{theoremN}{Theorem}
\newtheorem{propositionN}{Proposition}
\theoremstyle{plain}
\newtheorem{definition}[lemma]{Definition}
\newtheorem{example}[lemma]{Example}
\newtheorem{remark}[lemma]{Remark}
\newtheorem{construction}[lemma]{Construction}
\crefname{definition}{definition}{definitions}
\crefname{ex}{example}{examples}
\crefname{remark}{remark}{remarks}
\crefname{convention}{convention}{conventions}
\crefname{notation}{notation}{notations}
\crefname{table}{table}{tables}
\crefname{lemma}{lemma}{lemmas}
\crefname{proposition}{proposition}{propositions}
\crefname{corollary}{corollary}{corollaries}
\crefname{theorem}{theorem}{theorems}
\crefname{enumi}{}{}
\crefname{assumption}{assumption}{Assumptions}
\crefname{fact}{fact}{Facts}
\crefname{construction}{construction}{Constructions}
\crefname{equation}{}{}
\numberwithin{equation}{section}
\theoremstyle{nonumberplain}
\newtheorem{proof}{Proof}
\newcommand\pf[1]{\newtheorem{#1}{Proof of \Cref{#1}}}
\newcommand\bA{{\mathbb A}}
\newcommand\bB{{\mathbb B}}
\newcommand\bD{{\mathbb D}}
\newcommand\bE{{\mathbb E}}
\newcommand\bF{{\mathbb F}}
\newcommand\bG{{\mathbb G}}
\newcommand\bH{{\mathbb H}}
\newcommand\bK{{\mathbb K}}
\newcommand\bM{{\mathbb M}}
\newcommand\bN{{\mathbb N}}
\newcommand\bR{{\mathbb R}}
\newcommand\bS{{\mathbb S}}
\newcommand\bT{{\mathbb T}}
\newcommand\bU{{\mathbb U}}
\newcommand\bZ{{\mathbb Z}}
\newcommand\cH{{\mathcal H}}
\newcommand\cL{{\mathcal L}}
\newcommand\cO{{\mathcal O}}
\newcommand\fg{{\mathfrak g}}
\newcommand\fh{{\mathfrak h}}
\newcommand\fn{{\mathfrak n}}
\newcommand{\qedhere}{\mbox{}\hfill\ensuremath{\blacksquare}}
\renewcommand{\square}{\mathrel{\Box}}
\title{Finite central extensions of type I}
\author{Alexandru Chirvasitu}
\begin{document}

\date{}

\newcommand{\Addresses}{{
  \bigskip
  \footnotesize

  \textsc{Department of Mathematics, University at Buffalo, Buffalo,
    NY 14260-2900, USA}\par\nopagebreak \textit{E-mail address}:
  \texttt{achirvas@buffalo.edu}

}}

\maketitle

\begin{abstract}
  Let $\mathbb{G}$ be a Lie group with solvable connected component and finitely-generated component group and $\alpha\in H^2(\mathbb{G},\mathbb{S}^1)$ a cohomology class. We prove that if $(\mathbb{G},\alpha)$ is of type I then the same holds for the finite central extensions of $\mathbb{G}$. In particular, finite central extensions of type-I connected solvable Lie groups are again of type I. This is by contrast with the general case, whereby the type-I property does not survive under finite central extensions.

  We also show that ad-algebraic hulls of connected solvable Lie groups operate on these even when the latter are not simply connected, and give a group-theoretic characterization of the intersection of all Euclidean subgroups of a connected, simply-connected solvable group $\mathbb{G}$ containing a given central subgroup of $\mathbb{G}$.
\end{abstract}

\noindent {\em Key words: Lie group; Lie algebra; locally compact group; type I; center; central extension; cohomology; cocycle; discrete; exponential; simply-connected; Euclidean group}

\vspace{.5cm}

\noindent{MSC 2020: 22E27; 22E25; 22E15; 22E41; 22D10; 22E60; 20G20}

\tableofcontents

\section*{Introduction}

The proximal motivator for the present paper is the observation, evidenced by \cite[Example 20]{be}, that finite central extensions of type-I groups need not, in general, be type-I. By way of unwinding this remark, recall \cite[\S IV.1]{br-coh} that an {\it extension} of a group $\bG$ by another, $\bF$, is an exact sequence
\begin{equation}\label{eq:feg-intro}
  \{1\}\to \bF\to \bE\to \bG\to \{1\}
\end{equation}
(not infrequently, authors refer to this as an extension {\it of} $\bF$ {\it by} $\bG$ instead: \cite[\S 11.1]{rob-gr}, \cite[\S 9.1]{rot}, etc.; here, we observe the stated convention). The extension is {\it central} if $\bF\le \bE$ is a central subgroup, and {\it finite} if $\bF$ is finite.

As for the other ingredient, a locally compact (Hausdorff, second-countable) group $\bG$ is {\it of type I} if, for every irreducible unitary representation
\begin{equation*}
  \rho:\bG\to U(\cH)\quad(=\text{unitary group of the Hilbert space $\cH$})
\end{equation*}
the $C^*$-algebra generated by the unitary operators $\rho(g)$, $g\in \bG$ contains the compact operators on $\cH$.

This is one of many characterizations, and by no means the most edifying or inspiring: see e.g. \cite[Theorem 7.6]{folland} for a convenient listing and the surrounding text for pointers to the vast literature. Equivalent conditions require
\begin{itemize}
\item that various naturally-defined Borel structures on the {\it unitary dual} $\widehat{\bG}$ (isomorphism classes of irreducible unitary $\bG$-representations) be {\it countably separated} or, alternatively, {\it standard} (\cite[Theorem 6.8.7]{ped-aut} is also illuminating here);
\item or that the {\it Fell topology} \cite[\S 7.2]{folland} on $\widehat{\bG}$ satisfy the $T_0$ separation axiom;
\item or that an irreducible unitary representation be uniquely determined by its corresponding {\it primitive ideal} in the {\it full group $C^*$-algebra $C^*(\bG)$} \cite[discussion following Corollary 7.2]{folland};
\end{itemize}
and so on. The concept formalizes the intuition of having a reasonable moduli space of irreducible representations; the pleasant character of $\widehat{\bG}$ for type-I groups allows for a theory of integration over $\widehat{\bG}$, and hence to well-behaved decompositions of arbitrary unitary representations as {\it direct integrals} of irreducible ones (\cite[\S 7.4]{folland}, \cite[\S 18.8]{dixc}, etc.).

Broad classes of groups are known to be of type I; a sample: connected Lie if either semisimple or nilpotent, connected real algebraic, discrete virtually abelian (i.e. having an abelian subgroup of finite index) \cite[Theorem 7.8]{folland}, linear algebraic groups over characteristic-0 local fields \cite[Theorem 2]{be}.

That as mild and reasonable an operation as a central extension could break the type-I property might be somewhat surprising. It seemed natural, in light of this, to identify some sufficient properties on the type-I group $\bG$ (being centrally extended) that will prevent this from happening.

For one thing, \cite[Example 20]{be} is discrete: it is, in fact, a finite central extension of an infinitely-generated discrete abelian group. At the opposite extreme from this, one might hope for positive results in working with connected Lie groups instead. With this motivation, \Cref{th:solv} below (somewhat attenuated here for brevity) reads

\begin{theoremN}
  If a Lie group $\bG$ with solvable connected component $\bG_0$ and finitely-generated connected group $\bG/\bG_0$ is of type-I, so are its finite central extensions. \qedhere
\end{theoremN}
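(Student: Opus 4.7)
The plan is to reduce the statement, via Mackey's normal-subgroup machine, to a twisted (projective) type-I question about $\bG$ itself, then to the case $\bG$ connected solvable, and finally to verify the twisted criterion there using the ad-algebraic hull and Euclidean-subgroup technology advertised in the abstract.

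Concretely, for a finite central extension $1\to\bF\to\bE\to\bG\to 1$, the characters $\chi\in\widehat\bF$ partition the dual, $\widehat\bE=\bigsqcup_{\chi\in\widehat\bF}\widehat\bE_\chi$, with $\widehat\bE_\chi$ consisting of the irreducibles of $\bE$ that restrict $\chi$-isotypically to $\bF$. Standard cocycle theory identifies $\widehat\bE_\chi$, as a Borel space, with the set of $\alpha_\chi$-projective irreducibles of $\bG$, where $\alpha_\chi\in H^2(\bG,\bS^1)$ is the pushforward of the extension class along $\chi$. Thus $\bE$ is type I iff $(\bG,\alpha_\chi)$ is type I for every $\chi\in\widehat\bF$, and as $\bF$ and $\chi$ vary the classes $\alpha_\chi$ cover the torsion subgroup of $H^2(\bG,\bS^1)$. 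This reduces the theorem to the projective version advertised in the abstract: under the stated structural hypotheses, $(\bG,\alpha)$ is type I for every torsion $\alpha$.

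To reduce further to the connected case, note that $\bG_0$ is open in $\bG$ and $\bG/\bG_0$ is countable discrete. A second Mackey pass expresses $\widehat{(\bG,\alpha)}$ in terms of $\bG/\bG_0$-orbits on $\widehat{(\bG_0,\alpha|_{\bG_0})}$ and twisted duals of stabilizers inside $\bG/\bG_0$. The stabilizers are finitely generated discrete, and the type-I hypothesis on $\bG$ forces their twisted duals to be standard (so by Thoma-type results they are virtually abelian). The crux thus becomes the following assertion about connected solvable Lie groups $\bS$: if $\bS$ is of type I, then so is $(\bS,\beta)$ for every torsion $\beta\in H^2(\bS,\bS^1)$. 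Equivalently, passing to the central $\bS^1$-extension $\bS_\beta$ determined by $\beta$ (a connected Lie group with the same solvable Lie algebra as $\bS$), one must show $\bS_\beta$ is type I.

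For this core step one invokes the Auslander--Kostant criterion, which characterizes type I for a connected solvable Lie group by local closedness of coadjoint orbits of the simply-connected cover together with an integrality condition on stabilizers, both controlled by the discrete part of the center. The paper's result that ad-algebraic hulls operate on non-simply-connected connected solvable groups then allows one to organize these orbits uniformly for $\bS$ and $\bS_\beta$, while the group-theoretic characterization of the intersection of Euclidean subgroups containing a given central subgroup transports the integrality condition from $\bS$ to $\bS_\beta$. The main obstacle is precisely this last transfer: a finite central modification leaves Lie algebras untouched, yet the Auslander--Kostant conditions are genuinely sensitive to the discrete central data, and the paper's two announced technical tools are exactly what is needed to bridge that gap.
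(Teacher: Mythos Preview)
Your first two reductions are broadly in the right spirit and roughly parallel the paper: partitioning $\widehat{\bE}$ by characters of $\bF$ to reduce to a projective type-I question for $\bG$, and then using the Mackey machine relative to $\bG_0$ to isolate the connected solvable core. The paper does organize the ``general case'' somewhat differently, splitting into the two subcases $\bF\cap\bE_0=\{1\}$ and $\bF\subseteq\bE_0$ and treating each with a separate Mackey argument (the first invoking the compactly-generated abelian result of \Cref{th:cpctgenab}, the second requiring a careful comparison of Mackey obstructions), but your sketch is at least compatible with something like this.

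The genuine gap is in your final step, the connected solvable case. You guess that the paper proves ``$\bS$ type I $\Rightarrow$ $(\bS,\beta)$ type I for torsion $\beta$'' by invoking the Auslander--Kostant criterion and then using the algebraic-hull and Euclidean-subgroup results (\Cref{th:allfix}, \Cref{pr:purify}) to transport the integrality data. This is not what happens, and your own closing sentence concedes that you do not actually know how the transfer works. The paper instead uses Puk\'anszky's parametrization of primitive ideals of $C^*(\bH/\bD)$ by generalized orbits (\Cref{fc:hd}) and the two-part type-I criterion of \cite[\S 4.13]{puk}: an orbit is type I iff it is locally closed and the bilinear form $\omega_f$ of \Cref{eq:omegaf} has finite image. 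The key observation is then elementary scaling: if $f\in\fh^*$ is pertinent to $\bE=\bH/\bD_{\bE}$ and $n=[\bD:\bD_{\bE}]$, then $nf$ is pertinent to $\bG=\bH/\bD$, shares the same isotropy group $\bH_f=\bH_{nf}$, and has $\omega_{nf}=\omega_f^n$. Local closedness and finiteness of the image of $\omega_{nf}$ (granted by the type-I hypothesis on $\bG$) immediately yield the same for $f$.

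The results of \Cref{se:alghull} on algebraic hulls and Euclidean hulls are standalone complements, not ingredients in the proof of \Cref{th:solv}; the abstract's juxtaposition of them with the main theorem has misled you. Your proposed mechanism via Auslander--Kostant would require extending that criterion to non-simply-connected groups and then arguing stability under finite central modification, neither of which you carry out, and neither of which the paper attempts.
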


Connected solvable Lie groups are well understood from the perspective of characterizing those of type I: \cite{ak} and \cite[Chapter IV]{puk} for instance achieve this completely in the {\it simply-connected} case and \cite{am} provides much illuminating discussion and adjacent/partial results. The qualification of simple connectedness matters however: sources tend to discuss mostly simply-connected groups, whereas
\begin{itemize}
\item $\bE$ being a covering space of $\bG$, finite central extensions \Cref{eq:feg-intro} are only truly interesting for {\it non-}simply-connected groups;  
\item and it is perfectly possible for a non-simply-connected solvable Lie group to be of type I without its universal cover being so \cite{dix-revet}. 
\end{itemize}

With this in mind, it seems pertinent to revisit some of the techniques employed in the cited work and assess how crucial simple connectedness is. The background is reviewed more fully in \Cref{subse:nonsc}, but the crux of the matter appears to be that one needs the {\it algebraic hull} $Ad(\bG)_{alg}\le GL(\fg)$ to operate as an automorphism group of $\bG$.

Here, $\fg$ is the Lie algebra of $\bG$, $Ad$ is the adjoint representation of $\bG$ on that Lie algebra, and the algebraic hull is the smallest {\it algebraic} \cite[Chapitre II, \S 1, D\'efinition 1]{chv2} Lie group containing the image of that representation.

The algebraic hull is certainly (by definition) an automorphism group of the Lie {\it algebra} $\fg$, and that action lifts over to one on $\bG$ itself when the latter is simply connected. In \Cref{th:allfix} below we argue that in fact simple connectedness (for solvable $\bG$) is not, in fact, necessary:

\begin{theoremN}
  The algebraic hull $Ad(\bG)_{alg}$ of a connected, simply-connected solvable Lie group fixes the center $Z(\bG)$ pointwise.

  In particular, the action of $Ad(\bG)_{alg}$ on $\bG$ descends to one on any Lie group covered by $\bG$.  \qedhere
\end{theoremN}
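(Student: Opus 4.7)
My plan is to decompose $Z(\bG)$ into its identity component $Z(\bG)_0 = \exp\fz(\fg)$ and the discrete quotient, and handle these separately. A preliminary observation is that $Z(\bG)_0$ is a vector group: since $\bG$ is diffeomorphic to a Euclidean space (being simply-connected and solvable), it has no nontrivial compact subgroup, forcing the connected abelian subgroup $Z(\bG)_0$ to be isomorphic to $\bR^k$.

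First, $Ad(\bG)_{alg}$ fixes $\fz(\fg) = \ker\,ad$ pointwise. Indeed, for $X \in \fz(\fg)$, the vanishing $ad(X) = 0$ forces $Ad(\exp Y)(X) = X$ for every $Y$, so $Ad(\bG)$ fixes $X$; the Zariski-closed condition $\varphi|_{\fz(\fg)} = \mathrm{id}$ then extends this to $Ad(\bG)_{alg}$. Since $\bG$ is simply connected, each $\varphi \in Ad(\bG)_{alg}$ lifts uniquely to $\tilde\varphi \in \mathrm{Aut}(\bG)$ with $\tilde\varphi \circ \exp = \exp \circ \varphi$, whence $\tilde\varphi(\exp X) = \exp X$ for $X \in \fz(\fg)$: $Z(\bG)_0$ is fixed pointwise.

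For a discrete $z \in Z(\bG)$, I would introduce the continuous map $c_z : Ad(\bG)_{alg} \to Z(\bG)$, $c_z(\varphi) := \tilde\varphi(z) z^{-1}$. It vanishes on $Ad(\bG)$ because $\widetilde{Ad(g)}$ is conjugation by $g$, which fixes central elements. Restricted to the real identity component $Ad(\bG)_{alg}^0$, the image is a continuous image of a connected space containing $e$, so it lies in $Z(\bG)_0$; combined with the pointwise fixation of $Z(\bG)_0$, the general cocycle identity $c_z(\varphi_1\varphi_2) = c_z(\varphi_1)\cdot\tilde\varphi_1(c_z(\varphi_2))$ collapses into a homomorphism relation. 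Hence $c_z$ restricts to a continuous homomorphism $Ad(\bG)_{alg}^0 \to Z(\bG)_0 \cong \bR^k$.

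The main obstacle is showing this homomorphism vanishes. The idea is to invoke a Mostow-type structural decomposition (available for solvable $\fg$): $\mathrm{Lie}(Ad(\bG)_{alg}^0) = ad(\fg) + \fk$, where $\fk$ is a compact abelian subalgebra generated by the imaginary spectral parts of the $ad(X)$. Then $dc_z$ annihilates $ad(\fg)$ by the triviality of $c_z$ on $Ad(\bG)$, and annihilates $\fk$ because $\exp\fk$ is compact while $\bR^k$ is torsion-free. For the finite component group of $Ad(\bG)_{alg}$---whose non-identity representatives act on $\fg$ via sign changes on real-eigenvalued (``split'') eigenspaces of $ad$---one observes that the logarithm of a discrete central element has $ad$-spectrum in $2\pi i\bZ$, hence sits in the $0$-eigenspaces of these sign changes (the ``anisotropic'' directions) and is therefore fixed. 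This yields $\tilde\varphi(z) = z$ throughout $Ad(\bG)_{alg}$, and the ``In particular'' clause follows, as any central subgroup $F \subseteq Z(\bG)$ is fixed pointwise, so the $Ad(\bG)_{alg}$-action descends to $\bG/F$.
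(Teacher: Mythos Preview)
Your cocycle approach is appealing and genuinely different from the paper's, but it has two gaps. First, the ``Mostow-type'' decomposition $\mathrm{Lie}(Ad(\bG)_{alg}^0) = ad(\fg) + \fk$ is not true in general. The Lie algebra of the algebraic hull must contain the semisimple and nilpotent Jordan parts of every $ad(X)$ separately; when some $ad(X)$ has a nontrivial real-eigenvalue Jordan block, its semisimple part contributes a \emph{split} torus direction lying outside $ad(\fg)$. Concretely: for $\fg = \langle e_1,e_2,e_3\rangle$ with $[e_3,e_1]=e_1+e_2$ and $[e_3,e_2]=e_2$, one has $\fk=0$ but $\mathrm{Lie}(Ad(\bG)_{alg})$ is $4$-dimensional (it contains $S=\mathrm{diag}(1,1,0)\notin ad(\fg)$), so $ad(\fg)+\fk$ is strictly smaller. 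Your $dc_z$ is therefore not shown to vanish on such split directions. Second, the component-group step conflates two notions: that $ad(X)$ (for $X$ a logarithm of $z$) has purely imaginary spectrum describes how $X$ \emph{acts} on $\fg$, not where $X$ sits in the weight decomposition of $\fg$ under the torus $\bT$; it is the latter that governs whether sign-change representatives fix $X$. And no canonical logarithm of $z$ is available in the first place (cf.\ \Cref{re:centonexp}).

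The paper sidesteps both issues by a different organization. Rather than splitting into identity component versus component group, it first shows (\Cref{pr:ssfix}) that every \emph{semisimple} element of $Ad(\bG)_{alg}$ fixes $Z(\bG)$: via Chevalley's theorem a discrete complement $\bZ^d\le Z(\bG)$ embeds in $\exp(\fh)$ for some abelian $\fh\le\fg$, the image $Ad(\exp\fh)$ is a compact torus which extends to a maximal algebraic torus $\bT$, and abelianness of $\bT$ together with conjugacy of maximal tori finishes the argument. The proof of the theorem then shows that $Ad(\bG)_{alg}$ is generated by $Ad(\bG)$ and its set of semisimple elements, via the multiplicative Jordan decomposition and the algebraicity of unipotent one-parameter groups. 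This handles the split-torus directions and the component group simultaneously, as part of the ``semisimple'' case.
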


This does dot appear to be immediate, and requires an analysis of how central subgroups are positioned inside a connected, simply-connected solvable Lie group $\bG$ (in the spirit of \cite{chv-solv}, say). Specifically, it becomes important to understand the families of {\it Euclidean} subgroups of $\bG$ containing a given central subgroup. As an offshoot of that line of reasoning, we introduce a kind of ``saturation'' procedure for central subgroups and prove in \Cref{pr:purify} that it coincides with what might be termed the {\it Euclidean hull} of a central subgroup (a bit of a misnomer: that hull is not itself Euclidean, in general!):

\begin{propositionN}
  For a central subgroup $\bA\le \bG$ of a connected, simply-connected solvable Lie group the following subgroups of $\bG$ coincide:
  \begin{enumerate}[(a)]
  \item\label{item:15} the {\it purification} of $\bA$ in the center $Z(\bG)$, i.e. the largest $\overline{\bA}\le Z(\bG)$, containing $\bA$, and such that $Z(\bG)/\overline{\bA}$ is torsion-free;
  \item\label{item:16} the intersection of all Euclidean subgroups of $\bG$ containing $\bA$;
  \item\label{item:17} the intersection of all conjugates of any one minimal-dimensional Euclidean subgroup of $\bG$ containing $\bA$.  \qedhere
  \end{enumerate}  
\end{propositionN}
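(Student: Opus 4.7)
The plan is to prove the three subgroups coincide by establishing the chain $(\text{a}) \subseteq (\text{b}) \subseteq (\text{c}) \subseteq (\text{a})$. Throughout I will use that a connected, simply-connected solvable Lie group $\bG$ is diffeomorphic to $\bR^{\dim \bG}$, so it admits no nontrivial compact subgroup: any compact $K \le \bG$ would act freely on the contractible space $\bG$ by left translation, contradicting the fixed-point theorem for compact actions on contractible spaces. Consequently $\bG$ is torsion-free.

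For $(\text{a}) \subseteq (\text{b})$, it is convenient to identify $\overline{\bA}$ with the closure in $Z(\bG)$ of the algebraic pure hull $P := \{y \in Z(\bG) : y^k \in \bA \text{ for some } k \ge 1\}$, so that it suffices to show $P \subseteq V$ for every Euclidean $V \supseteq \bA$. Given $y \in P$ with $y^k \in \bA \subseteq V$, the subgroup $V\langle y\rangle$ is closed abelian (by centrality of $y$) and is a finite cyclic extension of $V$; divisibility of $V \cong \bR^{\dim V}$ produces $v \in V$ with $v^k = y^k$, making $yv^{-1}$ a torsion element of $\bG$ and therefore trivial, whence $y = v \in V$. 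For $(\text{b}) \subseteq (\text{c})$, every conjugate $gVg^{-1}$ of a minimum-dimensional Euclidean $V \supseteq \bA$ is again Euclidean and contains $\bA$ (by centrality of $\bA$), so (c) is an intersection over a subfamily of the family used in (b), and the inclusion is automatic.

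For $(\text{c}) \subseteq (\text{a})$, fix a minimum-dimensional Euclidean $V \supseteq \bA$ and set $N := \bigcap_g gVg^{-1}$. The structure of closed subgroups of $V \cong \bR^{\dim V}$, combined with minimality of $\dim V$, forces $\bA$ to span $V$ as a real vector space (otherwise the $\bR$-span of $\bA$ would be a strictly smaller Euclidean containing $\bA$), so $V/\bA$ is a torus. Then $(V \cap Z(\bG))/\bA$ is a closed subgroup of this torus, i.e.\ a compact abelian Lie group, in which torsion is dense; lifting, $P$ is dense in $V \cap Z(\bG)$ and hence $\overline{\bA} = V \cap Z(\bG)$. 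The inclusion $V \cap Z(\bG) \subseteq N$ is automatic—central elements of $V$ are fixed by conjugation and so lie in every $gVg^{-1}$—so the proof reduces to the reverse inclusion $N \subseteq Z(\bG)$.

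The main obstacle is precisely this last step, $N \subseteq Z(\bG)$. Passing to Lie algebras, $\operatorname{Lie}(N_0) = \bigcap_g \mathrm{Ad}(g)\fv$ (with $\fv := \operatorname{Lie}(V)$) is an abelian ideal of $\fg$ containing $\fa_0 := \operatorname{Lie}(\bA_0)$, so that $N_0 \subseteq Z(\bG)$ amounts to triviality of the $\mathrm{Ad}(\bG)$-action on this ideal; the discrete quotient $N/N_0$ is then automatically central because discrete normal subgroups of a connected Lie group are central. To force triviality of the action on $\operatorname{Lie}(N_0)$, I would leverage the minimality of $V$: using the structural analysis of central subgroups inside simply-connected solvable Lie groups carried out in \cite{chv-solv}, I expect any non-fixed direction in $\operatorname{Lie}(N_0)/\fa_0$ to be exploitable to produce a strictly smaller Euclidean subgroup of $\bG$ still containing $\bA$, contradicting minimality of $V$. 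Turning this heuristic into a precise construction of the smaller Euclidean is the delicate part of the argument.
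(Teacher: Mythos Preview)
Your inclusions $(\text{a}) \subseteq (\text{b}) \subseteq (\text{c})$ are correct and pleasantly direct; the divisibility/torsion-freeness argument for $(\text{a}) \subseteq (\text{b})$ is in fact cleaner than the paper's route, which instead establishes $(\text{b}) = (\text{c})$ via the conjugacy result \Cref{pr:1uptoconj} and then proves $(\text{c}) \supseteq (\text{a})$ separately.

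The gap is where you flag it, but your proposed fix heads in the wrong direction, and there is a logical slip just before it. Even granting $N_0 \subseteq Z(\bG)$, the clause ``$N/N_0$ is then automatically central because discrete normal subgroups of a connected Lie group are central'' only shows that $N/N_0$ is central in $\bG/N_0$, i.e.\ $[\bG,N] \subseteq N_0$; it does \emph{not} give $N \subseteq Z(\bG)$. As for the heuristic of exploiting a non-fixed direction in $\mathrm{Lie}(N_0)$ to manufacture a strictly smaller Euclidean: the paper bypasses this entirely, using only an ingredient you already have in hand. You observed that $\bA$ spans $V$; hence $\bA$ is cocompact in $V$ and therefore in $N \le V$. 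In particular $\bA \cap N_0$ is cocompact in the vector group $N_0$ and so spans it over $\bR$; since the $\mathrm{Ad}(\bG)$-action on $N_0$ is linear and fixes that spanning set pointwise, it is trivial, giving $N_0 \subseteq Z(\bG)$ with no further work. For the discrete remainder, once $[\bG,N] \subseteq N_0 \subseteq Z(\bG)$ the assignment $n \mapsto \bigl(g \mapsto [g,n]\bigr)$ is a homomorphism $N \to \mathrm{Hom}(\bG,N_0)$ killing both $\bA$ and $N_0$; it therefore factors through the \emph{finite} group $N/(\bA \cdot N_0)$ into the torsion-free $\mathrm{Hom}(\bG,N_0)$, hence vanishes identically. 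The paper compresses all of this into the single line ``$\bA$ cocompact and $\bG$-fixed in $\bH$ implies $\bH$ central''; no use of minimality beyond ``$\bA$ spans $V$'' is needed.
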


\Cref{se:fail} contains some odds and ends on the theme of type-I pathology (e.g. a natural example of a type-I semidirect product of a non-type-I group by a compact group, all Lie and connected: see \Cref{pr:compmaut1}and \Cref{ex:adm1}).

\subsection*{Acknowledgements}

This work is partially supported by NSF grant DMS-2001128.

I am grateful for valuable input on the present material from B. Bekka, S. Echterhoff and KH Neeb.

\section{Preliminaries}\label{se.prel}

Throughout, $\widehat{\bG}$ denotes the {\it spectrum} (or {\it unitary dual}) of a locally compact group $\bG$ (as in \cite[\S 18.3.2]{dixc}, say), i.e. the set of isomorphism classes of irreducible unitary representations. We will often have to speak of type-I {\it regularly embedded} normal subgroups $\bN\trianglelefteq \bG$ in the sense of \cite[\S 3.8]{mack-unit}:
\begin{itemize}
\item the Borel structure on the quotient $\widehat{\bN}/\bG$ is countably separated;
\item or $\bG$-ergodic measures on $\widehat{\bN}$ are supported on orbits;
\item or any number of other equivalent formulations \cite[Theorem 1]{glm}, all equivalent in the cases of interest.
\end{itemize}
Alternative terminology: $\bG$ (or $\bG/\bN$) {\it acts smoothly} on $\widehat{\bN}$.

For a connected Lie group $\bG$ write
\begin{itemize}
\item $\bG_0$ for its {\it connected component} (i.e. the connected component containing the identity); 
\item $\widetilde{\bG}$ for its universal cover;
\item and $N(\bG)$ for its {\it nilradical}: the largest connected, normal, nilpotent subgroup \cite[\S III.9.7, Proposition 23]{bourb-lie-13}. 
\end{itemize}

We gather a number of well-known observations about the center of a Lie group, which it will be convenient to bear in mind.

\begin{remark}\label{re:allaboutcenters}
  Let $\bG$ be a connected Lie group throughout.
  \begin{enumerate}[(a)]
  \item\label{item:3} In general, its center $Z(\bG)$ is a sum
    \begin{equation*}
      Z(\bG)\cong \bR^d\oplus \bT^e\oplus \bD
    \end{equation*}
    where $\bT^e$ denotes the $e$-dimensional torus and $\bD$ is finitely-generated discrete abelian (and hence in turn, of the form $\bF\oplus \bZ^f$ for finite $\bF$). This follows from \cite[Corollary 4.2.6]{de}.
  \item\label{item:4} When $\bG$ is moreover simply-connected the torus portion is absent: $Z(\bG)_0\cong \bR^d$. This is because normal, connected (hence also closed by \cite[discussion preceding \S XIII of Chapter IV]{chv1}) Lie subgroups $\bN\trianglelefteq \bG$ of simply-connected Lie groups are again simply-connected.
  \item\label{item:5} That last remark follows for instance from the long exact homotopy sequence
    \begin{equation*}
      \cdots\to \pi_2(\bG/\bN)\to \pi_1(\bN)\to \pi_1(\bG)\to \cdots
    \end{equation*}
    attached to the inclusion $\bN\le \bG$ \cite[\S 17.3]{steen} and the fact that $\pi_2$ vanishes for connected Lie groups (e.g. because $\pi_1$ vanishes for their based-loop spaces \cite[Theorem 21.7 and Remark 2 following it]{milnor-morse}).
  \item\label{item:6} Still assuming $\bG$ simply-connected, the connected component $Z(\bG)_0$ is precisely the intersection $Z(\bG)\cap N(\bG)$ with the nilradical. One inclusion is obvious, while the connectedness of $Z(\bG)\cap N(\bG)$ follows from a familiar argument (e.g. as in the proofs of \cite[\S III.9.5, Propositions 15 and 16]{bourb-lie-13}): 

    Any group $\Gamma$ of automorphisms of $\bN:=N(\bG)$ operates on the Lie algebra $\fn:=Lie(\bN)$. Since the exponential map
    \begin{equation}\label{eq:expmap}
      \exp:\fn\to \bN
    \end{equation}
    is an analytic isomorphism \cite[\S III.9.5, Proposition 13]{bourb-lie-13}, a non-trivial element is fixed by $\Gamma$ if and only if the unique one-parameter subgroup of $\bN$ containing it is. In conclusion, the fixed-point set of $\Gamma$ in $\bN$ is a {\it connected} closed subgroup of the latter. Applying this to the group of inner automorphisms induced by $\bG$, we obtain
    \begin{equation*}
      Z(\bG)_0 = Z(\bG)\cap \bN.
    \end{equation*}
    All in all, for a connected, simply-connected Lie group we have a decomposition
    \begin{equation}\label{eq:zsplit}
      Z(\bG)=\bZ(\bG)_0\oplus \bD_{nil'},\quad \bZ(\bG)_0 = Z(\bG)\cap N(\bG)\cong \bR^d,\quad \bD_{nil'}\text{ discrete}. 
    \end{equation}
    While the connected component of the center $Z(\bG)$ is of course canonical, the complementary summand $\bD_{nil'}$ is not, in general: one can always deform generators thereof by adding elements of $Z(\bG)_0$.
    
  \item\label{item:7} If $\bG$ is simply-connected and {\it solvable} the discrete piece $\bD_{nil'}$ in \Cref{eq:zsplit} is free abelian (i.e. torsion-free) \cite[Theorem 1]{chv-solv}.
  \end{enumerate}
\end{remark}

Because at this point it seems pertinent as a follow-up remark, note that the ``unique one-parameter group'' mentioned in \Cref{re:allaboutcenters} \Cref{item:6} can be made sense of somewhat more broadly.

\begin{lemma}\label{le:solvsc1par}
  Let $\bG$ be a connected, simply-connected solvable Lie group and $\bN:=N(\bG)$ its nilradical.

  An element of $\bN$ lies on a unique 1-parameter subgroup of $\bG$.
\end{lemma}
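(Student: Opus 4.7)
The strategy is to reduce both existence and uniqueness to the bijectivity of $\exp:\fn\to\bN$ recorded in \Cref{re:allaboutcenters}\Cref{item:6}, via the following key claim: every $Y\in\fg$ with $\exp_{\bG}(Y)\in\bN$ in fact lies in $\fn$. Granting this, existence is immediate---for $n\in\bN$ the vector $X:=\exp^{-1}_{\bN}(n)\in\fn\subseteq\fg$ yields the 1-parameter subgroup $t\mapsto\exp_{\bG}(tX)$ through $n$---and uniqueness follows by observing that any other 1-parameter subgroup through $n$, say $t\mapsto\exp_{\bG}(tY)$ with $\exp_{\bG}(t_0Y)=n$ for some $t_0\ne 0$, must have $Y\in\fn$ by the claim; the bijectivity of $\exp_{\bN}$ then forces $t_0Y=X$, so the two 1-parameter subgroups coincide as subsets of $\bG$.

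To prove the claim, I pass to the quotient $\bG/\bN$. Since $\bG$ is solvable we have $[\fg,\fg]\subseteq\fn$, making $\fg/\fn$ abelian and $\bG/\bN$ a connected abelian Lie group. It is moreover simply-connected: the long exact homotopy sequence of the fibration $\bN\to\bG\to\bG/\bN$, together with $\pi_1(\bN)=0$ (per \Cref{re:allaboutcenters}\Cref{item:4}), $\pi_1(\bG)=0$, and $\pi_0(\bN)=0$, forces $\pi_1(\bG/\bN)=0$. Hence $\bG/\bN\cong\bR^a$ for some $a\geq 0$, on which $\exp$ is bijective; projecting $\exp_{\bG}(Y)\in\bN$ down yields $\exp_{\bG/\bN}(\overline Y)=\overline 1$, whence $\overline Y=0$ in $\fg/\fn$, i.e.\ $Y\in\fn$.

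The main obstacle, though not deep, is verifying that $\bG/\bN$ is \emph{simply-connected} rather than merely abelian: an abelian quotient of the form $\bR^a\times\bT^b$ with nontrivial torus factor would support nonzero $\overline Y\in\fg/\fn$ with trivial exponential, spoiling the last step. Once this structural point is secured via the homotopy LES, the rest of the argument is a one-line consequence of $\exp$ being bijective on both $\fn$ and $\fg/\fn$.
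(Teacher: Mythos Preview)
Your proof is correct and rests on the same pivotal fact as the paper's: that $\bG/\bN$ is simply-connected (hence Euclidean), so a 1-parameter subgroup of $\bG$ meeting $\bN$ nontrivially must already lie in $\bN$. The paper packages this slightly differently---it first passes to the closed simply-connected subgroup with Lie algebra $\fn\oplus\bR v$ to reduce to codimension one, and then derives a contradiction (the quotient would have to be a circle, yet also simply-connected). Your direct argument, observing that $\exp_{\bG/\bN}$ is injective on all of $\fg/\fn\cong\bR^a$, sidesteps that reduction and is marginally cleaner; the underlying idea is the same.
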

\begin{proof}
  The isomorphism \Cref{eq:expmap} ensures that it lies on a unique such subgroup of $\bN$, but the claim is that no 1-parameter group of $\bG$ {\it not} contained in $\bN$ can intersect the latter non-trivially.

  Suppose it did: for some element $v\in \fg:=Lie(\bG)$ of the Lie algebra of $\bG$, not contained in the smaller Lie algebra $\fn:=Lie(\bN)$, 
  \begin{equation*}
    \bR\ni t\mapsto \exp(tv)\in \bG
  \end{equation*}
  intersects $\bN$ at some $t\ne 0$. The Lie algebra $\fn\oplus \bR v\le \fg$ is that of a closed, connected, simply-connected Lie subgroup of $\bG$ (\cite[\S II, Theorem]{chv-solv} or \cite[\S III.9.6, Proposition 21]{bourb-lie-13}). In restricting attention to that subgroup, we can assume that $\bN\le \bG$ has codimension 1.

  By assumption, $\bG/\bN$ is non-trivial and compact, i.e. a circle. But it is also a quotient of a simply-connected Lie group by a normal, closed, connected subgroup, so must be simply-connected (\Cref{re:allaboutcenters} \Cref{item:4}); this is the desired contradiction.
\end{proof}

Solvability cannot be dropped in \Cref{le:solvsc1par}: 

\begin{example}
  Consider a connected, simply-connected, semisimple Lie group $\bK$ (e.g. a special unitary group $SU(n)$) acting on a real vector space $\bR^d$ so that some circle $\bS^1\le \bK$ fixes a vector $v\in \bR^d$.

  In the simply-connected group $\bG:=\bR^d\rtimes \bK$ the one-parameter groups generated by $v\in Lie(\bR^d)\cong \bR^d$ and by $v+w$ for a generator $w$ of the Lie algebra of the circle in question both contain $v\in \bR^d\le N(\bG)$.
\end{example}

On the other hand, even when $\bG$ is (as in the statement) solvable, \Cref{le:solvsc1par} does not apply to elements {\it outside} the nilradical: see \Cref{re:centonexp}.

\section{Discrete / abelian groups}\label{se:discrete}

Since we are concerned with the survival of the type-I property under finite central extensions, it will be convenient to have some shorthand language for the notions involved.

\begin{definition}\label{def:robust}
  Let $\bG$ be a locally compact, second countable group.
  \begin{itemize}
  \item If $\bG$ is type-I is {\it fc-robust} (for `finite central') or just {\it robust} if all of its finite central extensions are of type I. We might also say that the group is {\it (fc-)robustly type-I}.
  \item The same terms apply to pairs $(\bG,\alpha)$ of a group $\bG$ and a {\it cohomology class} $\alpha$ (\cite[Chapter I, discussion preceding Proposition 5.2]{am}): the question is whether, given an element $\alpha$ of the cohomology group $H^2(\bG,\bS^1)$ \cite[Part I, Chapter I]{moore-ext}, the middle term in the central extension
    \begin{equation*}
      \{1\}\to \bS^1\to \bullet\to \bG\to \{1\}
    \end{equation*}
    attached to $\alpha$ (\cite[Part I, Introduction]{moore-ext}) is type-I. We abbreviate this to saying that the pair $(\bG,\alpha)$ is of type I.

    The pair $(\bG,\alpha)$ is {\it fc-robustly} (or just plain {\it robustly}) type-I if for every finite central extension $\bE\to \bG$, regarding $\alpha$ as an element of $H^2(\bE,\bS^1)$ through the restriction map
    \begin{equation*}
      H^2(\bG,\bS^1)\to H^2(\bE,\bS^1),
    \end{equation*}
    the pair $(\bE,\alpha)$ is of type I.    
  \end{itemize}
\end{definition}

\begin{proposition}\label{pr:fgfc}
  Let
  \begin{equation}\label{eq:fea}
    \{1\}\to \bF\to \bE\to \bA\to \{1\}
  \end{equation}
  be a finite central extension of discrete groups. If $\bA$ is finitely-generated and of type I then so is $\bE$.
\end{proposition}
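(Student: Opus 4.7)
The plan is to reduce to the case where $\bA$ is abelian and then exploit the resulting nilpotence of class $\le 2$. The starting point is the classical fact (Thoma, cited in \cite[Theorem 7.8]{folland}) that a discrete group is of type I if and only if it is virtually abelian. Applied to $\bA$, this produces an abelian subgroup $\bA_0\le \bA$ of finite index. Pulling back along $\bE\to \bA$, let $\bE_0\le \bE$ be the preimage of $\bA_0$; then $[\bE:\bE_0]=[\bA:\bA_0]<\infty$, and $\bE_0$ sits in a finite central extension
\begin{equation*}
  \{1\}\to \bF\to \bE_0\to \bA_0\to \{1\}.
\end{equation*}
Since $\bA$ is finitely generated and $\bF$ is finite, $\bE$ is finitely generated, hence so is the finite-index subgroup $\bE_0$. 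To conclude, it will suffice to show that $\bE_0$ is virtually abelian: that forces $\bE$ to be virtually abelian, hence of type I by the easier direction of the same theorem.

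Because $\bA_0$ is abelian, every commutator in $\bE_0$ already lies in $\bF$; thus $[\bE_0,\bE_0]\le \bF$ is finite and central, and $\bE_0$ is nilpotent of class $\le 2$. Pick generators $g_1,\dots,g_k$ of $\bE_0$ and set $M=|\bF|$. In any class-$2$ nilpotent group one has the identity $[g^n,h]=[g,h]^n$; since each $[g_i,g_j]\in \bF$ has order dividing $M$, we obtain $[g_i^M,g_j]=1$ for all $i,j$, so each $g_i^M$ lies in $Z(\bE_0)$. Let $Z_1:=\langle g_1^M,\dots,g_k^M\rangle\le Z(\bE_0)$, an abelian subgroup.

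The quotient $\bE_0/Z_1$ is again nilpotent of class $\le 2$, generated by the images of the $g_i$, and in it both these generators and their mutual commutators have order dividing $M$. The standard class-$2$ normal form
\begin{equation*}
  g_1^{a_1}\cdots g_k^{a_k}\prod_{i<j}[g_i,g_j]^{b_{ij}},\qquad 0\le a_i,b_{ij}<M,
\end{equation*}
then enumerates $\bE_0/Z_1$ finitely, so $Z_1$ has finite index in $\bE_0$. Hence $\bE_0$, and therefore $\bE$, is virtually abelian, completing the proof.

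The only non-formal step is the finiteness of $\bE_0/Z_1$ via the normal form, and this is a routine calculation in two-step nilpotent groups (expand any word in the $g_i$ by pushing every instance of $g_j g_i$ past $g_i g_j$ at the cost of inserting the central commutator $[g_i,g_j]$); no part of the argument seems to pose a genuine obstacle.
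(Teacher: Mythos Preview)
Your proof is correct and follows essentially the same approach as the paper: both reduce via Thoma's theorem to the case of abelian $\bA$, then exploit the class-$2$ nilpotence of the extension together with finite generation to exhibit a finite-index abelian (in fact central) subgroup. The paper phrases the second step through the commutator bilinear form $\omega:\bA^2\to\bF$, intersecting the cofinite kernels $\ker\omega(a,-)$ over a finite generating set, while you work directly with the $M^{\text{th}}$ powers of generators in $\bE_0$ and a collection argument; these are two packagings of the same observation that $\omega(Ma,-)\equiv 1$.
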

\begin{proof}
  Since $\bA$ is assumed type-I it has a finite-index abelian subgroup by Thoma's theorem (see e.g. \cite[Satz 6]{thoma}, \cite[Theorem 1]{tt} or \cite[Theorem E]{bekka-count}). Passing to a cofinite subgroup does not affect the type-I property, so we may as well assume that $\bA$ is abelian to begin with.

  We now have a central extension of an {\it abelian} group, whence the commutator map
  \begin{equation*}
    \bE^2\ni (x,y)\xmapsto[]{} [x,y]:=xyx^{-1}y^{-1}\in \bF
  \end{equation*}
  descends to a (skew-symmetric) bilinear map $\omega:\bA^2\to \bF$.

  The map
  \begin{equation*}
    \omega(a,-):\bA\to \bF
  \end{equation*}
  annihilates a cofinite subgroup $\ker_{\omega}(a)$ of $\bA$, and the (finite!) intersection
  \begin{equation*}
    \bA_0:=\bigcap_{\text{generators }a}\ker_{\omega}(a)\le \bA
  \end{equation*}
  will then be a finite-index subgroup of $\bA$ in the kernel of $\omega$, in the sense that
  \begin{equation*}
    \omega(\bA_0,-)\equiv 0\equiv \omega(-,\bA_0).
  \end{equation*}
  It follows that the central extension \Cref{eq:fea} can be rearranged as
  \begin{equation*}
    \{1\}\to \bF\times \bA_0\to \bE\to \bA/\bA_0\to \{1\},
  \end{equation*}
  which is of course virtually abelian. 
\end{proof}

\begin{remark}
\cite[Example 20]{be} shows that the finite generation assumption in \Cref{pr:fgfc} cannot, in general, be dropped: the central extension discussed there is of the form
  \begin{equation*}
    \{1\}\to \bZ/p\to \bE\to V\times V\to \{1\}
  \end{equation*}
  for a prime $p$, where $V$ is a direct sum of countably infinitely many copies of $\bZ/p$. It is obtained by summing infinitely many copies of the order-$p^3$ Heisenberg group and identifying the centers to a single copy of $\bZ/p$.

  More generally, some such pathological extension
  \begin{equation*}
    \{1\}\to \bF\to \bE\to \bA\to \{1\}
  \end{equation*}
  exists whenever there is a skew-symmetric bilinear map $\omega:\bA^2\to \bF$ whose kernel
  \begin{equation*}
    \{a\in \bA\ |\ \omega(a,-)\equiv 0\equiv \omega(-,a)\}
  \end{equation*}
  has infinite index in $\bA$.
\end{remark}

We revisit \Cref{pr:fgfc}, this time assuming abelianness but dropping discreteness. We also twist by a cocycle, as in \Cref{def:robust}. 

\begin{theorem}\label{th:cpctgenab}
  Let $\bA$ be a locally compact, abelian, compactly generated group and $\alpha\in H^2(\bA,\bS^1)$. The pair $(\bA,\alpha)$ is robustly type-I in the sense of \Cref{def:robust}.
\end{theorem}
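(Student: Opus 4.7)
The plan is to imitate the strategy of \Cref{pr:fgfc}, replacing finite generation by compact generation and absorbing the cocycle twist into a base case. Given a finite central extension $\{1\}\to\bF\to\bE\to\bA\to\{1\}$, regarding $\alpha$ as pulled back to $\bE$, I aim to exhibit an open finite-index subgroup $\bE_1\le\bE$ which is abelian, argue that the middle $\widetilde{\bE_1}$ of its $\bS^1$-extension is of type I, and transfer this to $\widetilde\bE$ via the standard finite-index inheritance of the type-I property.

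The commutator map $\bE\times\bE\to\bF$ descends to a continuous skew-symmetric bilinear form $\omega_{\bF}:\bA\times\bA\to\bF$. Writing $\bA\cong\bR^a\times\bZ^b\times\bK$ with $\bK$ compact abelian (the structure of a compactly generated LCA group), the connectedness of $\bA_0\cong\bR^a\times\bK_0$ and discreteness of $\bF$ force $\omega_{\bF}$ to annihilate $\bA_0$ on either side, so it factors through the totally disconnected quotient $\bA/\bA_0$. That quotient admits a compact open subgroup $\bC/\bA_0$ with finitely generated discrete quotient; by compactness of $\bC/\bA_0$ and continuity of $\omega_{\bF}$ into discrete $\bF$, the form vanishes on $\bD\times(\bC/\bA_0)$ for some open finite-index $\bD\le\bC/\bA_0$, and intersecting $\bD$ with the (open, finite-index) kernels of $\omega_{\bF}(-,g_i)$ for a finite generating set $\{g_i\}$ of $(\bA/\bA_0)/(\bC/\bA_0)$, then pulling back to $\bA$, yields an open finite-index subgroup $\bA_1\le\bA$ in the bilateral kernel of $\omega_{\bF}$. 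Its preimage $\bE_1\le\bE$ is then open of finite index in $\bE$ and, by construction, abelian.

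The pair $(\bE_1,\alpha|_{\bE_1})$ now satisfies the hypothesis of the theorem with trivial finite central kernel. Granting the ``base case'' — that for any abelian, compactly generated group, the $\bS^1$-extension attached to any cocycle is of type I — one obtains that $\widetilde{\bE_1}$ is of type I. Since $\widetilde{\bE_1}$ is open of finite index in $\widetilde\bE$, replacing it by its (still open finite-index) normal core and applying standard Mackey induction from finite-index open normal subgroups, $\widetilde\bE$ inherits the type-I property: the conjugation action of the finite quotient on the dual of the normal subgroup is automatically smooth with finite orbits, and each little group is finite, hence type I.

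The main obstacle is the base case itself. On the connected Lie piece $\bR^a\times\bK_0$, the restriction of $\alpha$ produces a Heisenberg-type connected nilpotent Lie quotient, which is type I by classical results. The delicate point is the interaction of $\alpha$ with the totally disconnected piece $\bA/\bA_0\cong\bZ^b\times\bK/\bK_0$, where compact generation of $\bA$ must be used decisively to rule out irrational-rotation-type behaviour (a non-type-I twisted group algebra of a discrete abelian group under an irrational symplectic cocycle). I expect this is where the bulk of the work lies, and where the compact-generation hypothesis is indispensable.
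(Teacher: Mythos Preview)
Your reduction to an open finite-index abelian subgroup $\bE_1\le\bE$ is essentially the content of \Cref{le:easiere} and is correct. The problem is what you do afterwards: the ``base case'' you want to grant is \emph{false}. Take $\bA=\bZ^2$ with the cocycle $\alpha_\theta$ coming from an irrational rotation by $\theta$. Then $\bZ^2$ is certainly compactly generated and abelian, but the $\alpha_\theta$-twisted group $C^*$-algebra is the irrational rotation algebra, which is simple and not of type I; equivalently, the central extension $\bA_{\alpha_\theta}$ is not type I. The ``irrational-rotation-type behaviour'' you hope to rule out is therefore not a pathology to be eliminated but an honest counterexample, and no amount of compact generation will save you.

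What has gone wrong is that you never invoke the hypothesis. The statement is conditional: \emph{if} $(\bA,\alpha)$ is of type I, \emph{then} so is $(\bE,\alpha)$ for every finite central extension $\bE\to\bA$ (this is how the result is used in the proof of \Cref{th:solv}, and the paper's own proof opens with ``We are assuming that \ldots $\bA_\alpha$ \ldots is of type I''). Your argument, by contrast, attempts to produce type I for $(\bE_1,\alpha)$ out of thin air. The paper's strategy after the reduction is to run the Mackey machine in parallel for $\bA_\alpha$ and $\bE_\alpha$: both contain type-I normal subgroups $\bN$ and $\bN_\bE$ with common discrete abelian quotient $\bZ^e$, and one shows that $\widehat{\bN_\bE}$ decomposes as a disjoint union of $|\widehat\bF|$ copies of $\widehat\bN$, equivariantly and preserving Mackey obstructions. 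The type-I criteria (smooth action, type-I little-group pairs) for $\bE_\alpha$ then reduce to the \emph{assumed} criteria for $\bA_\alpha$.
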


We fix a finite central extension \Cref{pr:fgfc} throughout, and begin with some simplifying assumptions.

\begin{lemma}\label{le:easiere}
  Let $\bE$ be a locally compact group fitting into a finite central extension \Cref{eq:fea} with $\bA$ abelian and compactly generated.

  Modulo replacing $\bE$ with a finite-index subgroup, we can assume it splits as
  \begin{equation}\label{eq:edecomp}
      \bE\cong \bR^d \times \bZ^e\times \bK_{\bE}
    \end{equation}
    with compact abelian $\bK_{\bE}$ for a finite extension
    \begin{equation}\label{eq:keext}
      \{1\}\to \bF\to \bK_{\bE}\to \bK\to \{1\}.
    \end{equation}
\end{lemma}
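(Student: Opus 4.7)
The plan is to pass to an open finite-index subgroup of $\bE$ on which the commutator is trivial, and then to apply the structure theorem for compactly generated locally compact abelian (LCA) groups directly.

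To begin, I would use the structure theorem to write $\bA\cong \bR^d\times \bZ^e\times \bK$ with $\bK$ compact abelian. Since $\bF$ is central and $\bA=\bE/\bF$ is abelian, $[\bE,\bE]\subseteq \bF$, so the commutator descends to a biadditive, alternating map $\omega\colon \bA\times \bA\to \bF$. Continuity of $\omega$ follows from the fact that $\bE\to\bA$ is a covering map---a principal bundle for the discrete finite group $\bF$---and therefore admits local continuous sections; centrality of $\bF$ makes the resulting commutator independent of the choice of lift. Because $\omega$ takes values in the discrete group $\bF$ while $\bA_0=\bR^d\times \bK_0$ is connected, $\omega$ vanishes whenever either of its arguments lies in $\bA_0$.

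Next, I would produce the finite-index subgroup. For each $a\in \bA$ the continuous homomorphism $\omega(a,-)\colon \bA\to \bF$ has image in the finite group $\bF$ and hence open kernel of finite index. Fixing a compact generating set $S\subset \bA$, the assignment $s\mapsto \omega(s,-)$ is a continuous map from $S$ into $\mathrm{Hom}_{\mathrm{cts}}(\bA,\bF)$, which is discrete because its codomain is, so the map has finite image. The subgroup
\[ \bA''\;:=\;\bigcap_{s\in S}\ker\bigl(\omega(s,-)\bigr) \]
is therefore a finite intersection of open finite-index subgroups, hence itself open of finite index, and bilinearity together with the choice of $S$ yields $\omega(\bA'',\bA)=\{1\}$. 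The preimage $\bE''\le \bE$ is then an open finite-index subgroup with $[\bE'',\bE'']=\{1\}$, i.e.\ abelian. Replace $\bE$ by $\bE''$ and $\bA$ by $\bA''$; the latter is still a compactly generated LCA group, and since open finite-index subgroups preserve the $\bR$- and $\bZ$-ranks it is of the form $\bR^d\times \bZ^e\times \bK'$ for some compact abelian $\bK'$. We may henceforth assume $\bE$ itself is abelian.

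Finally, $\bE$ is now a compactly generated LCA group, so the structure theorem yields $\bE\cong \bR^D\times \bZ^E\times \bK_\bE$ with $\bK_\bE$ compact abelian. Since $\bR^D\times \bZ^E$ is torsion-free, the finite group $\bF\subseteq \bE$ lies entirely in $\bK_\bE$, and $\bE/\bF\cong \bR^D\times \bZ^E\times (\bK_\bE/\bF)$. Matching this against the decomposition $\bA\cong \bR^d\times \bZ^e\times \bK'$ and invoking uniqueness of the invariants of a compactly generated LCA group forces $D=d$, $E=e$, and $\bK_\bE/\bF\cong \bK'$---exactly the finite extension \Cref{eq:keext} (after relabelling $\bK'$ as $\bK$). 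The main obstacle is the initial continuity-plus-finiteness analysis of $\omega$; once this is in hand, the remaining steps are bookkeeping with the LCA structure theorem.
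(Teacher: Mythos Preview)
Your proof is correct and takes a genuinely different route from the paper's. The paper proceeds in three explicit stages: it first splits off the Euclidean factor $\bR^d$ (arguing that finite central extensions of $\bR^d$ are trivial, obtaining a semidirect product $\bE\cong\bE'\rtimes\bR^d$, and then checking the $\bR^d$-action is trivial), then splits off $\bZ^e$ after shrinking so that $|\bF|\bZ^e$ lies in the kernel of the commutator form, and finally shrinks $\bK$ so that its preimage $\bK_\bE$ becomes abelian. You instead kill the commutator form globally in one step: passing to a finite-index $\bA''$ on which $\omega$ vanishes identically makes the whole preimage $\bE''$ abelian, and a single application of the LCA structure theorem, together with the invariance of the $\bR$- and $\bZ$-ranks under open finite-index passage, yields the decomposition at once. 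Your argument is more streamlined; the paper's is more constructive in exhibiting where each factor of $\bE$ sits.

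One point worth tightening: the continuity of $s\mapsto\omega(s,-)$ from $S$ into $\mathrm{Hom}_{\mathrm{cts}}(\bA,\bF)$ is what turns the a priori infinite intersection $\bigcap_{s\in S}\ker\omega(s,-)$ into a finite one, so it carries real weight. It does follow from the exponential law for the compact--open topology (a continuous $\bA\times\bA\to\bF$ yields a continuous $\bA\to C(\bA,\bF)$, using only that $\bA$ is locally compact Hausdorff), but you might say so explicitly rather than leaving it implicit.
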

\begin{proof}
  The standard structure result on compactly-generated locally compact abelian groups (e.g. \cite[Theorem 4.2.2]{be}) says that we have a decomposition
  \begin{equation*}
    \bA\cong \bR^d\times \bZ^e\times \bK
  \end{equation*}
  for compact abelian $\bK$. This will be the $\bK$ in the statement, perhaps after passing to a cofinite subgroup. We effect the splitting \Cref{eq:edecomp} gradually.
  
  \begin{enumerate}[(1)]
  \item {\bf Splitting off the Euclidean factor.} Note first that finite central extensions
    \begin{equation*}
      \{1\}\to \bF\to \bullet\to \bR^d\to \{1\}
    \end{equation*}
    of Euclidean groups always split: this is because, $\bR^d$ being connected, the commutator induced bilinear form $\bR^d\times \bR^d\to \bF$ (as in the proof of \Cref{pr:fgfc}) is trivial. It follows then that $\bullet$ is abelian, whence the applicability of the usual structure theorems for locally compact abelian groups \cite[Theorem 4.2.2]{de}. It already follows from this that $\bE$ fits into an extension
    \begin{equation*}
      \{1\}\to \bR^d\times \bF\to \bE\to \bZ^e\times \bK\to \{1\},
    \end{equation*}
    so the quotient $\bE\to \bR^d$ splits and we have a semidirect-product decomposition
    \begin{equation*}
      \bE\cong \bE'\rtimes \bR^d
    \end{equation*}
    for
    \begin{equation}\label{eq:e'}
      \{1\}\to \bF\to \bE'\to \bZ^e\times \bK\to \{1\}. 
    \end{equation}
    Because $\bR$
    \begin{itemize}
    \item operates trivially on $\bF$;
    \item and on $\bZ^e\times \bK$;
    \item and is connected, 
    \end{itemize}
    it must operate trivially period. In short: $\bE\cong \bE'\times \bR^d$.
  \item {\bf Splitting off a free abelian factor.} Next, note that $|\bF|\bZ^e$ is contained in the kernel of the commutator bilinear form
    \begin{equation*}
      (\bZ^e\times \bK)^2\to \bF
    \end{equation*}
    induced by \Cref{eq:e'}. Upon shrinking $\bE'$ (and $\bE$) to a cofinite subgroup we can assume $\bZ^e$ itself is in that kernel. The subgroup
    \begin{equation*}
      \{1\}\to \bF\to \bullet\to\bZ^e\to \{1\}
    \end{equation*}
    of $\bE'$ is then abelian finitely generated, and hence the extension in question splits: $\bullet\cong \bF\times \bZ^e$. In any case, the surjection $\bE'\to \bZ^e$ also splits, so that $\bE'\cong \bK_{\bE}\rtimes \bZ^e$ with \Cref{eq:keext}. Moreover, $\bZ^e$-action on $\bK_{\bE}$ is compatible with that extension: it centralizes $\bF$ and also centralizes the quotient $\bK$.

    But then $|\bF|\bZ^e$ acts trivially on $\bK_{\bE}$, hence the decomposition \Cref{eq:edecomp} after, perhaps, shrinking to an even smaller (but still cofinite) subgroup.
  \item {\bf $\bK_{\bE}$ can be assumed abelian.} So far we only have a finite central extension \Cref{eq:keext} with compact abelian $\bK$. As usual, that extension gives rise to a bilinear form
    \begin{equation}\label{eq:k2f}
      \bK^2\to \bF
    \end{equation}
    or equivalently, to a morphism
    \begin{equation*}
      \bK\to \mathrm{Hom}(\bK,\bF). 
    \end{equation*}
    Since the codomain is discrete and the domain compact, that morphism is trivial on a cofinite subgroup $\bK'\le \bK$. But then, restricting the commutator bilinear form \Cref{eq:k2f} vanishes when restricted to $\bK'\times \bK'$, so the cofinite subgroup
    \begin{equation*}
      \{1\}\to \bF\to \bullet\to \bK'\to \{1\}
    \end{equation*}
    of $\bK_{\bE}$ is abelian.
  \end{enumerate}
  This finishes the proof. 
\end{proof}

A general piece of notation, before moving on to the proof of \Cref{th:cpctgenab}: for a cohomology class $\alpha\in H^2(\bG,\bA)$ we write $\bG_{\alpha}$ for the middle term of the resulting extension
\begin{equation}\label{eq:galpha}
  \{1\}\to \bA\to \bG_{\alpha}\to \bG\to \{1\}.
\end{equation}

\pf{th:cpctgenab}
\begin{th:cpctgenab}
  \Cref{le:easiere} allows us to assume that we have a decomposition \Cref{eq:edecomp} for a compact abelian group $\bK_{\bE}$ fitting into the extension \Cref{eq:keext}, so that
  \begin{equation*}
    \bA\cong \bR^d\times \bZ^e\times \bK. 
  \end{equation*}
  We are assuming that the central extension
  \begin{equation*}
    \{1\}\to \bS^1\to \bA_{\alpha}\to \bA\to \{1\}
  \end{equation*}
  attached to $\alpha\in H^2(\bA,\bS^1)$ is of type I, and seek to prove the same of $\bE_{\alpha}$ in
  \begin{equation*}
    \{1\}\to \bS^1\to \bE_{\alpha}\to \bE\to \{1\}. 
  \end{equation*}
  In both $\bA_{\alpha}$ and $\bE_{\alpha}$ we can isolate the lattice $\bE^e$ as a quotient, as in
  \begin{align*}
    \{1\}\to \bN\to \bA_{\alpha}\to \bZ^e\to \{1\}\\
    \{1\}\to \bN_{\bE}\to \bE_{\alpha}\to \bZ^e\to \{1\}\\
  \end{align*}
  for central extensions
  \begin{align*}
    \{1\}\to \bS^1\to \bN\to \bR^d\times \bK\to \{1\}\\
    \{1\}\to \bS^1\to \bN_{\bE}\to \bR^d\times \bK_{\bE}\to \{1\}.\\
  \end{align*}
  Next, observe that $\bN$ and $\bN_{\bE}$ are of type I regardless of any other considerations: they both contain
  \begin{itemize}
  \item connected nilpotent (hence type-I \cite[Th\'eor\`eme 3]{dix-nil-1}) normal subgroups $\bN$ and $\bN_{\bE}$;
  \item with compact quotients $\bK$ and $\bK_{\bE}$ respectively, hence the conclusion by \cite[Corollary 4.5]{gk} (for instance).
  \end{itemize}
  Because $\bA_{\alpha}$ and $\bE_{\alpha}$ contain the type-I normal subgroups $\bN$ and $\bN_{\bE}$ respectively with abelian discrete quotients $\bZ^e$, \cite[Chapter II, Corollary to Theorem 9 and Chapter I, Proposition 10.4]{am} imply that in both cases being type-I amounts to the following two conditions. 
  \begin{itemize}
  \item $\bN$ or $\bN_{\bE}$ are regularly embedded in $\bA_{\alpha}$ and $\bE_{\alpha}$ respectively;
  \item and (leaving the entirely analogous statement for $\bN_{\bE}$ to the reader) for every $x\in \widehat{\bN}$, if
    \begin{equation*}
      \bA_{\alpha,x}\le \bA_{\alpha},\quad \bZ^e_x\le \bZ^e
    \end{equation*}    
    denote the isotropy groups of $x$ and
    \begin{equation*}
      \alpha_x\in H^2(\bA_{\alpha,x}/\bN,\bS^1)\cong H^2(\bZ^e_x,\bS^1)
    \end{equation*}
    is the {\it Mackey obstruction class} attached to $x$ \cite[Chapter I, Proposition 10.3]{am}, the pair $(\bZ^e_x,\alpha_x)$ is of type I.
  \end{itemize}

  I now claim that the spectrum $\widehat{\bN_{\bE}}$ decomposes (non-canonically) as a disjoint union
  \begin{equation}\label{eq:nedecomp}
    \widehat{\bN_{\bE}}\cong \coprod_{\widehat{\bF}} \widehat{\bN}
  \end{equation}
  of copies of the spectrum of $\bN$ so as to preserve all of the relevant structure: the $\bZ^e$-action as well as the resulting Mackey obstructions. Clearly, given the preceding discussion, such a decomposition would entail the conclusion. It thus remains to explain how \Cref{eq:nedecomp} comes about; the rest of the proof is devoted to precisely this task.
  
  As a first step, we do have a decomposition
  \begin{equation*}
    \widehat{\bN_{\bE}} = \coprod_{\chi\in \widehat{\bF}}\widehat{\bN_{\bE}}_{\chi},
  \end{equation*}
  with $\widehat{\bN}_{\bE,\chi}$ denoting those irreducible $\bN_{\bE}$-representations wherein $\bF$ acts via the character $\chi$ (it must act by scalars, being central). For each $\chi\in\widehat{\bF}$ choose (non-canonically) an extension $\overline{\chi}\in \widehat{\bK_{\bE}}$, as the surjection
  \begin{equation*}
    \widehat{\bK_{\bE}} \twoheadrightarrow \widehat{\bF}
  \end{equation*}  
  of discrete abelian groups makes possible. Regard $\overline{\chi}$ as a character on all of $\bN_{\bE}$ by composing with the surjection $\bN_{\bE}\to \bK_{\bE}$.

  Finally, tensoring with the 1-dimensional representation $\overline{\chi}^{-1}$ now induces the desired identification
  \begin{equation*}
    \widehat{\bN}_{\bE,\chi} \cong \widehat{\bN}. 
  \end{equation*}
  The equivariance of this identification under the $\bZ^e$-action follows from the fact that we are twisting with characters $\overline{\chi}:\bN_{\bE}\to \bS^1$ trivial on the central circle, and the fact that the Mackey obstructions are preserved follows by direct examination of the construction (e.g. in \cite[Chapter I, proof of Proposition 10.3]{am}).
\end{th:cpctgenab}

\section{Solvable Lie groups}\label{se:solvlie}

The main result is

\begin{theorem}\label{th:solv}
  Let $\bG$ be a Lie group with 
  \begin{itemize}
  \item solvable connected component $\bG_0$;
  \item and finitely-generated component group $\bG/\bG_0$
  \end{itemize}
  and $\alpha\in H^2(\bG,\bS^1)$ a cohomology class. If $(\bG,\alpha)$ is type-I then it is robustly so.
\end{theorem}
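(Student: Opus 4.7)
The plan is to reduce, via the Mackey machine and the preceding results on abelian compactly generated groups, to the crux of transferring type-I across a finite central extension of a connected solvable Lie group. Fix a finite central extension $\{1\}\to \bF\to \bE\to \bG\to \{1\}$. Since $\bF$ is finite, the connected component $\bE_0$ surjects onto $\bG_0$ with finite central kernel $\bE_0\cap\bF$, so $\bE_0$ is itself a connected solvable Lie group; simultaneously, $\bE/\bE_0$ is a finite extension of $\bG/\bG_0$, still finitely generated. Thus $\bE$ satisfies the same structural hypotheses as $\bG$, and the task is to show that the three Mackey-machine conditions characterizing type-I of $(\bG,\alpha)$ also hold for $(\bE,\alpha)$.

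I would apply the Mackey machine to the open normal inclusion $\bE_0 \trianglelefteq \bE$, twisted by $\alpha$. Type-I of $(\bE,\alpha)$ then amounts to: (a) $(\bE_0,\alpha|_{\bE_0})$ is itself type-I; (b) $\bE_0$ is regularly embedded in $\bE_\alpha$; and (c) for each $\bE/\bE_0$-orbit on $\widehat{(\bE_0)_\alpha}$, the isotropy subgroup in $\bE/\bE_0$ together with its Mackey obstruction cocycle is type-I. The analogous conditions hold on the $\bG$-side by hypothesis. Conditions (b) and (c) transfer without serious difficulty: as in the proof of \Cref{th:cpctgenab}, the spectrum $\widehat{(\bE_0)_\alpha}$ decomposes along characters of $\bF \cap \bE_0$ into copies of $\widehat{(\bG_0)_\alpha}$, permuted in the same way by the quotient, while the isotropy subgroups in $\bE/\bE_0$ are finite central extensions of those in $\bG/\bG_0$, to which \Cref{pr:fgfc} and \Cref{th:cpctgenab} apply.

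The serious work lies in condition (a): showing that $(\bG_0,\alpha|_{\bG_0})$ type-I implies $(\bE_0,\alpha|_{\bE_0})$ type-I. Here I would invoke the characterization of type-I for connected solvable Lie groups via the orbits of the algebraic hull $Ad(\widetilde{\bG_0})_{alg}$ acting on the appropriate space of characters of the nilradical. Since $\widetilde{\bG_0}=\widetilde{\bE_0}$, the algebraic hull is a common object, and \Cref{th:allfix} ensures that it operates on both $\bG_0$ and $\bE_0$, fixing each center pointwise. The finite central cover $\bE_0\to \bG_0$ alters the center only by a finite subgroup, so the orbit-closure structure governing type-I is preserved; \Cref{pr:purify} provides the structural control on central subgroups inside Euclidean subgroups needed to verify this precisely. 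I expect this last step---extending the Auslander--Kostant--Puk\'anszky criterion to non-simply-connected solvable groups in a form manifestly compatible with finite central covers---to be the main technical obstacle, with the remaining reductions largely bookkeeping given the material of \Cref{se:discrete}.
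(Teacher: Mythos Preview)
Your overall Mackey-machine scaffolding matches the paper's, but two load-bearing steps are not right.

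First, the claim that $\widehat{(\bE_0)_\alpha}$ ``decomposes along characters of $\bF\cap\bE_0$ into copies of $\widehat{(\bG_0)_\alpha}$'' does not follow as in \Cref{th:cpctgenab}. That earlier decomposition worked because the finite central $\bF$ sat inside a compact \emph{abelian} $\bK_{\bE}$, so characters of $\bF$ extended to characters of the whole group and one could twist. For a general connected solvable $\bE_0$ there is no reason a character of $\bF\cap\bE_0$ extends to a $1$-dimensional character of $\bE_0$, so the pieces of $\widehat{(\bE_0)_\alpha}$ over the various $\chi\in\widehat{\bF\cap\bE_0}$ need not be copies of $\widehat{(\bG_0)_\alpha}$. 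The paper does \emph{not} use such a decomposition; in the case $\bF\subseteq\bE_0$ it instead produces a (non-bijective) equivariant map $\varphi:\widehat{\bE_0}\to\widehat{\bG_0}$ via the Puk\'anszky parametrization, and then runs a separate argument showing the Mackey obstruction satisfies $\alpha_x^n=\alpha_{\varphi(x)}|_{\bA_x}$, concluding via the fact that type-I for $(\bZ^e,\beta)$ forces $\beta$ to have finite order.

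Second, and more fundamentally, your plan for the connected case (your condition (a)) routes through \Cref{th:allfix} and \Cref{pr:purify}, but those results are \emph{not} used in the paper's proof of \Cref{th:solv}; they are independent complements in \Cref{se:alghull}. The paper's actual mechanism for the connected case is the scaling trick in Puk\'anszky's generalized-orbit picture: writing $\bG_0=\bH/\bD$ and $\bE_0=\bH/\bD_{\bE}$ with $n=[\bD:\bD_{\bE}]$, if $f\in\fh^*$ is relevant to $\bE_0$ then $nf$ is relevant to $\bG_0$, with $\bH_f=\bH_{nf}$ and $\omega_{nf}=\omega_f^{\,n}$. Local closedness of the coadjoint orbit of $nf$ (from type-I of $\bG_0$) gives local closedness for $f$, and finite image of $\omega_f^{\,n}$ forces finite image of $\omega_f$; this is what transfers the Puk\'anszky type-I criterion across the finite cover. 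Your proposal to instead ``extend the Auslander--Kostant--Puk\'anszky criterion to non-simply-connected solvable groups in a form manifestly compatible with finite central covers'' via the algebraic hull is a different and much vaguer route, and you have not indicated what replaces this scaling argument.
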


The crushing majority of the requisite hard work is contained in \cite{puk}; we will recall just enough of that background to (one hopes) make sense of the proof. 

Let $\bH$ be a connected, simply-connected solvable Lie group (so denoted in order to distinguish it from the generic, possibly non-simply-connected $\bG$, and its more laboriously-named universal cover $\widetilde{\bG}$). \cite[Chapter IV]{puk} describes a bijection between the space of primitive ideals of the full $C^*$-algebra $C^*(\bH)$ and a space of what \cite[\S 4.9]{puk} refers to as {\it generalized orbits}. These are certain torus bundles over subsets of the dual
\begin{equation*}
  \fh^*:=\mathrm{Hom}(\fh,\bR),\ \fh:=\text{the Lie algebra }Lie(\bH). 
\end{equation*}
A very brief overview follows, which serves the dual purpose of fixing some notation. The reader is encouraged to also consult \cite[\S 4.2]{puk} for a very illuminating summary.

\begin{itemize}
\item Let $f\in \fh^*$ be a linear functional defined on the Lie algebra $\fh:=Lie(\bH)$ and $\bH_f$ its isotropy group with respect to the {\it coadjoint action}: the dual of the adjoint action of $\bH$ on $\fh$ by conjugation.
\item The commutator subgroup $\bH'=[\bH,\bH]$ is closed, nilpotent, connected and simply-connected (as follows, say, from \cite[\S II, ending remark]{chv-solv}) with Lie algebra $\fh'=[\fh,\fh]$.
\item The restriction of $f$ to the Lie algebra
  \begin{equation*}
    \fh_f := Lie(\bH_f) = Lie(\bH_{f,0})
  \end{equation*}
  of the identity component $\bH_{f,0}\le \bH_f$ vanishes on the derived subalgebra $\fh_f'$ (by invariance under the coadjoint action), so is a {\it Lie algebra} morphism $\fh_f\to \bR$. Because $\bH_{f,0}$ is connected and simply-connected, $f$ integrates to a unique Lie {\it group} morphism (i.e. character)
  \begin{equation*}
    \chi_f:\bH_{f,0}\to \bS^1
  \end{equation*}
  by \cite[\S III.6.1, Theorem 1]{bourb-lie-13}.
\item It is observed in \cite[\S 4.2, III]{puk} that the intersection $\bH'\cap \bG_f$ is connected and hence coincides with $\bH'\cap \bH_{f,0}$. It follows, then, that for $x,y\in \bH_f$ the commutator
  \begin{equation*}
    [x,y]:=xyx^{-1}y^{-1}\text{ belongs to }\bH_{f,0}.
  \end{equation*}

\item Coadjoint-invariance then also ensures that
  \begin{equation}\label{eq:omegaf}
    \bH_f\times \bH_f\ni (x,y)\xmapsto[]{\quad\omega_f\quad} \chi_f([x,y])\in \bS^1
  \end{equation}
  is a skew-symmetric bilinear form (the right-hand side makes sense by the previous point).
\item One can then define $\overline{\bH}_f$ as the kernel of that form:
  \begin{equation}\label{eq:hfbar}
    \overline{\bH}_f:=\ker \omega:=\{x\in \bH_f\ |\ \omega(x,-)\equiv 1\equiv \omega(-,x)\}. 
  \end{equation}
\item The character $\chi_f:\bH_f\to \bS^1$ always extends to $\overline{\bH}_f$, and the set of extensions is a {\it torsor} (i.e. free transitive space) over the torus
  \begin{equation}\label{eq:tftor}
    \bT_f:=\widehat{\overline{\bH}_f/\bH_{f,0}}
  \end{equation}
  (torus because, as explained in \cite[\S 4.2, III]{puk} again, the larger group $\bH_f/\bH_{f,0}$ is finitely-generated free abelian).
\end{itemize}

Now, the coadjoint action of $\bH$ on $\fh^*$ might not be smooth (i.e. \cite[Theorem 1]{glm}: the quotient space might not be $T_0$, or some coadjoint orbits might fail to be locally closed). \cite[\S 4.8]{puk} nevertheless introduces a smooth relation `$\sim$' on $\fh^*$, coarsening that induced by the coadjoint action (i.e. so that $\sim$-classes are unions of coadjoint $\bH$-orbits), and minimal with this property. Its classes are referred to there as {\it regularized} or {\it R-orbits}; each is the closure of any of the coadjoint orbits it contains.

Fix an R-orbit $\cO\subset \fh^*$. It can be shown that the set
\begin{equation*}
  \cL(\cO):=\{(f,\chi)\ |\ f\in \cO,\ \chi:\overline{\bH}_f\to \bS^1\text{ restricting to }\chi_f\text{ on }\bH_{f,0}\}
\end{equation*}
is a fibration over $\cO$, with fiber \Cref{eq:tftor} (in particular, the dimension of that torus depends only on $\cO$, not on $f\in \cO$). $\bH$ has a natural action on $\cL(\cO)$, the closures of its orbits are classes of an equivalence relation, and it is these closures that \cite[\S 4/9]{puk} refers to as {\it generalized orbits}.

The gist of \cite[Chapter IV]{puk} is then to describe a bijection
\begin{equation}\label{eq:pukbij}
  (\text{generalized orbits}) \longleftrightarrow (\text{primitive ideals of $C^*(\bH)$}). 
\end{equation}

We are here interested in possibly non-simply-connected groups, which would be of the form $\bH/\bD$ for discrete central $\bD\le \bH$. The central group $\bD$ is of course contained in $\bH_f$ and in fact $\overline{\bH_f}$ for any $f\in \fh^*$ whatsoever, so it makes sense to restrict characters of $\overline{\bH}_f/\bH_{f,0}$ to the image of $\bD$ therein. Running through the brief description of \Cref{eq:pukbij} given in \cite[\S 4.9]{puk}, it is not difficult to check the following slight amplification thereof (needed below):

\begin{fact}\label{fc:hd}
  Let $\bH$ be a connected, simply-connected solvable group and $\bD\le \bH$ a discrete central subgroup.

  The bijection \Cref{eq:pukbij} restricts to an identification between the primitive-ideal space of $C^*(\bH/\bD)$ and those generalized orbits consisting of elements $(f,\chi)\in \cL(\cO)$ for
  \begin{equation*}
    \chi:\overline{\bH}_f\to \bS^1,\ \chi|_{\bD}\equiv 1. 
  \end{equation*}
  In particular, the R-orbits $\cO$ featuring in this bijection are precisely those containing $f$ for which $\chi_f$ annihilates $\bD\cap \bH_{f,0}$. 
\end{fact}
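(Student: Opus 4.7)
The plan is to trace through Pukanszky's construction and determine which pairs $(f,\chi)$ yield representations trivial on $\bD$. As a first (formal) reduction, since the surjection $\bH\twoheadrightarrow \bH/\bD$ induces a surjection $C^*(\bH)\twoheadrightarrow C^*(\bH/\bD)$, the space $\mathrm{Prim}(C^*(\bH/\bD))$ is canonically identified with the set of primitive ideals of $C^*(\bH)$ whose associated irreducible representations are trivial on $\bD$. Via \Cref{eq:pukbij}, this reduces the problem to pinning down which generalized orbits yield representations trivial on $\bD$.

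The heart of the matter is then a direct computation. Recall from \cite[\S 4.9]{puk} that the irreducible representation attached to $(f,\chi)\in\cL(\cO)$ is constructed by an induction procedure whose inducing datum contains the character $\chi:\overline{\bH}_f\to\bS^1$. Since $\bD$ is central in $\bH$, it lies in $\bH_f$ for every $f\in\fh^*$, hence also in $\overline{\bH}_f$, and passes directly into the inducing datum; unpacking the construction, $\bD$ acts on the resulting representation by the scalar character $\chi|_{\bD}$. Hence the representation is trivial on $\bD$ if and only if $\chi|_{\bD}\equiv 1$. Because conjugation by $\bH$ fixes $\bD$ pointwise, the condition $\chi|_{\bD}\equiv 1$ is invariant under the $\bH$-action on $\cL(\cO)$ and under closures, so it is well-defined on generalized orbits.

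For the ``in particular'' clause, an R-orbit $\cO$ occurs if and only if some $f\in\cO$ admits an extension $\chi$ of $\chi_f$ to $\overline{\bH}_f$ with $\chi|_{\bD}\equiv 1$. Necessity is immediate: any such $\chi$ restricts on $\bH_{f,0}$ to $\chi_f$, forcing $\chi_f|_{\bD\cap\bH_{f,0}}\equiv 1$. For sufficiency, fix any extension $\chi_0$ of $\chi_f$; under the hypothesis $\chi_0|_{\bD}$ factors through $\bD/(\bD\cap\bH_{f,0})$. The extensions of $\chi_f$ form a torsor over $\widehat{\overline{\bH}_f/\bH_{f,0}}$, and by divisibility of $\bS^1$, the character $(\chi_0|_{\bD})^{-1}$, viewed on the image of $\bD$ in the finitely-generated free abelian group $\overline{\bH}_f/\bH_{f,0}$, extends to a character $\mu$ of the entire quotient; then $\chi:=\chi_0\cdot(\mu\circ\mathrm{pr})$ with $\mathrm{pr}:\overline{\bH}_f\twoheadrightarrow \overline{\bH}_f/\bH_{f,0}$ is the desired extension.

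The principal obstacle is the explicit verification that $\bD$ acts through the scalar character $\chi|_{\bD}$ on the Pukanszky-induced representation; this requires carefully unpacking the inducing construction of \cite[\S 4.9]{puk} and checking that central data are preserved along the induction step. The remaining reductions---the identification of $\mathrm{Prim}(C^*(\bH/\bD))$ with a subset of $\mathrm{Prim}(C^*(\bH))$, and the character-extension argument for the ``in particular'' clause---are essentially formal.
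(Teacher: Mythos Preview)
Your proposal is correct and follows essentially the same line as the paper: both defer the main identification to an inspection of Puk\'anszky's construction in \cite[\S 4.9]{puk}, and both isolate the character-extension step in the ``in particular'' clause as the only point needing real argument. Your torsor-and-divisibility formulation of that step is a mild repackaging of the paper's argument (which pushes $\overline{\bH}_f$ out along $\chi_f$, observes the resulting group is abelian because $\overline{\bH}_f=\ker\omega_f$, splits off $\bS^1$, and then chooses a character on the free-abelian complement annihilating the image of $\bD$), but the underlying mechanism---extending a character from the image of $\bD$ in $\overline{\bH}_f/\bH_{f,0}$ using injectivity of $\bS^1$---is the same.
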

\begin{proof}
  Most of this was sketched in the discussion preceding the statement. The only point perhaps worth elaborating is the last claim. Specifically, it should be clear upon perusal of the cited material of \cite[Chapter IV]{puk} why {\it only} the R-orbits $\cO$ in the statement correspond to primitive ideals of $C^*(\bH/\bD)$; the claim, however, is that {\it all} such R-orbits appear.

  Formally, the claim is that if $f\in \fh^*$ is such that $\chi_f$ annihilates $\bD\cap \bH_{f,0}$, then it can be extended to a character $\chi:\overline{\bH}_f\to \bS^1$ annihilating all of $\bD$. To verify this, consider the extensions
  \begin{equation}\label{eq:hha}
    \begin{tikzpicture}[auto,baseline=(current  bounding  box.center)]
      \path[anchor=base] 
      (0,0) node (ll) {$\{1\}$}
      +(1.5,.7) node (lu) {$\bH_{f,0}$}
      +(1.5,-.7) node (ld) {$\bS^1$}
      +(3.5,.8) node (mu) {$\overline{\bH}_f$}
      +(3.5,-.8) node (md) {$\bullet$}
      +(5.5,0) node (r) {$\bA$}      
      +(6.5,0) node (rr) {$\{1\}$}
      ;
      \draw[->] (ll) to[bend left=6] node[pos=.5,auto] {$\scriptstyle $} (lu);
      \draw[->] (ll) to[bend right=6] node[pos=.5,auto] {$\scriptstyle $} (ld);
      \draw[->] (lu) to[bend left=6] node[pos=.5,auto] {$\scriptstyle $} (mu);
      \draw[->] (ld) to[bend right=6] node[pos=.5,auto] {$\scriptstyle $} (md);
      \draw[->] (mu) to[bend left=6] node[pos=.5,auto] {$\scriptstyle $} (r);
      \draw[->] (md) to[bend right=6] node[pos=.5,auto] {$\scriptstyle $} (r);
      \draw[->] (r) to[bend right=0] node[pos=.5,auto] {$\scriptstyle $} (rr);
      \draw[->] (lu) to[bend right=0] node[pos=.5,auto] {$\scriptstyle \chi_f$} (ld);
      \draw[->] (mu) to[bend right=0] node[pos=.5,auto] {$\scriptstyle$} (md);
    \end{tikzpicture}
  \end{equation}
  where
  \begin{equation*}
    \bA:=\overline{\bH}_f/\bH_{f,0}\text{ is finitely-generated free abelian}.
  \end{equation*}
  The definition of $\overline{\bH}_{f}$ as the kernel of the bilinear form \Cref{eq:omegaf} implies that the group $\bullet$ is in fact abelian, and hence the bottom extension splits (non-canonically) as
  \begin{equation*}
    \bullet\cong \bS^1\oplus \bA. 
  \end{equation*}
  Now simply extend the identity character on $\bS^1$ to a character of all of $\bullet$ by supplementing it with a character of $\bA$ annihilating the image of
  \begin{equation*}
    \bD\le \overline{\bH}_{f}\to \overline{\bH}_{f}/\bH_{f,0}=\bA. 
  \end{equation*}
  The resulting character on $\bullet$ will then pull back along the longer vertical map in \Cref{eq:hha} to one on $\overline{\bH}_f$ that
  \begin{itemize}
  \item coincides with the original $\chi_f$ on $\bH_{f,0}$;
  \item and annihilates all of $\bD$.
  \end{itemize}
  This was precisely the claim to be proven, so we are done. 
\end{proof}

The following piece of terminology will make it slightly less awkward to speak about the coadjoint orbits or R-orbits attached to a possibly non-simply-connected $\bH/\bD$. 

\begin{definition}\label{def:orbpert}
  Let $\bH$ be a connected, simply-connected, solvable Lie group with Lie algebra $\fh:=Lie(\bH)$ and $\bD\subset \bH$ a discrete central subgroup.
  \begin{itemize}
  \item An element $f\in \fh^*$ is {\it relevant} or {\it pertinent} to $\bH/\bD$ if it belongs to an R-orbit featuring in the primitive-ideal-generalized orbit bijection of \Cref{fc:hd} for $C^*(\bH/\bD)$.
  \item The same terms apply to coadjoint or R-orbits themselves: they are {\it relevant} or {\it pertinent} to $\bH/\bD$ if they contain elements with this property.
  \item For $(\bH/\bD)$-pertinent/relevant (R- or coadjoint) orbits we might also simply say that these are orbits {\it of} $\bH/\bD$.
  \end{itemize}
\end{definition}

\pf{th:solv}
\begin{th:solv}
  We make a number of gradual simplifications.
  \begin{enumerate}[(1)]
  \item\label{item:10} {\bf Disposing of cohomology.} The cohomology class $\alpha$ of the statement gives a central extension
    \begin{equation*}
      \{1\}\to \bS^1\to \bullet\to \bG\to \{1\},
    \end{equation*}
    and we have to argue that if it is of type I then any finite extension thereof is as well. But note that the middle term $\bullet$ itself satisfies the hypothesis (has solvable identity component, etc.), so we can just work with {\it it} directly (in place of $\bG$).

    For the duration of the proof, then, we can safely ignore $\alpha$.
    
  \item\label{item:11} {\bf Connected $\bG$.} That is, we do not have to worry, here, about the discrete portion $\bG/\bG_0$.

    The preceding remedial discussion (including \Cref{fc:hd}) applies to the universal cover $\bH:=\widetilde{\bG}$ and a discrete central $\bD\le \bH$ with $\bG\cong \bH/\bD$. It applies equally to a smaller, cofinite
    \begin{equation*}
      \bD_{\bE}\le \bD,\ \bF:= \bD/\bD_{\bE}\text{ finite}
    \end{equation*}
    giving a quotient $\bE:=\bH/\bD_{\bE}$ fitting into a finite central extension
    \begin{equation*}
      \{1\}\to \bF\to \bE\to \bG\to \{1\}. 
    \end{equation*}
    \cite[\S 4.13, Proposition]{puk} precisely describes when the generalized orbits discussed in \Cref{fc:hd} are (i.e. correspond to primitive ideals) of type I: the relevant plain coadjoint orbits $\cO\subset \fh^*$ need to be locally closed, and the quotients
    \begin{equation*}
      \bH_f/\overline{\bH}_f,\ f\in \cO
    \end{equation*}
    have to be finite. Equivalently (and more conveniently for the present discussion), the bilinear form \Cref{eq:omegaf} must have finite image. We are assuming this is the case for $\bG$, and want to deduce the same of $\bE$.
    
    An R-orbit relevant to $\bE$ consists of $f$ whose $\chi_f$ annihilates $\bD_{\bE}\cap \bH_f$. Because the quotient
    \begin{equation*}
      \bF:=\bD/\bD_{\bE} 
    \end{equation*}
    is finite of order, say, $n:=|\bF|$, the rescaled $nf$ annihilates
    \begin{equation*}
      \bD\cap \bH_f = \bD\cap \bH_{nf}\quad (\text{noting that }\bH_f=\bH_{nf}). 
    \end{equation*}
    It thus follows that $nf$ features in the primitive-ideal classification for $\bG\cong \bH/\bD$. It follows, then, that its coadjoint orbit is locally closed, hence so is that of $f = \frac 1n (nf)$. This disposes, then, of the orbit-regularity half of the story.

    To address the finiteness of $\bH_f/\overline{\bH}_f$, observe that by the same argument
    \begin{equation*}
      \bH_{f}/\overline{\bH}_{nf} = \bH_{nf}/\overline{\bH}_{nf}
    \end{equation*}
    {\it is} finite (by the type-I assumption on $\bG$). But this amounts to the $n^{th}$ power
    \begin{equation*}
      \omega_f^n = \omega_{nf}:\bH_f\times \bH_f\to \bS^1
    \end{equation*}
    of \Cref{eq:omegaf} having finite image, whence the finite-image constraint on $\omega_f$ itself.
    
  \item\label{item:12} {\bf General case.} Suppose $\bG$ as in the statement is of type I, and
    \begin{equation*}
      \{1\}\to \bF\to \bE\to \bG\to \{1\}
    \end{equation*}
    is a finite central extension thereof.

    The quotient $\bG/\bG_0$ is type-I (being a quotient of a type-I group), so modulo harmless substitution of a cofinite subgroup of $\bG$ for the latter we can assume, via Thoma's theorem, that $\bG/\bG_0$ is in fact abelian. Additionally, the identity component $\bG_0$ is also of type I, being open in a type-I group \cite[Proposition 2.4]{kal2}. The preceding discussion (on the connected case) thus applies to it.

    The problem naturally splits into two special instances: lifting the type-I property along the quotient by $\bF/\bF\cap \bE_0$ first, and then lifting again along the (quotient by) remainder $\bF\cap \bE_0$. To keep the notational overload at a minimum, we consider the two cases separately:

    \begin{enumerate}[(a)]
    \item {\bf The central finite group $\bF$ intersects the identity component $\bE_0$ trivially.} Or: the covering $\bE\to \bG$ restricts to an isomorphism
      \begin{equation*}
        \bE_0\cong \bG_0.
      \end{equation*}
      We apply the Mackey machine (summarized, for instance, in \cite[Chapter I, Proposition 10.4]{am}) twice, to the normal subgroups
      \begin{equation*}
        \bG_0\trianglelefteq \bE\ \text{or}\ \bG.
      \end{equation*}
      The orbits of the induced actions of
      \begin{equation*}
        \bA:=\bG/\bG_0\quad\text{or}\quad \bA_{\bE}:=\bE/\bG_0
      \end{equation*}
      on the spectrum $\widehat{\bG_0}$ are the same. Because $\bG$ is of type I and $\bA$ is discrete abelian, \cite[\S II.8, Corollary to Theorem 9]{am} implies that those orbits are locally closed. For each $x\in\widehat{\bG_0}$ denoting by `$x$' subscripts the respective isotropy groups, we have a finite central extension
      \begin{equation*}
        \{1\}\to \bF\to \bA_{\bE,x}\to \bA_x\to \{1\}.
      \end{equation*}
      Furthermore, the Mackey obstruction class in $H^2(\bA_{\bE,x},\bS^1)$ is pulled back from the analogous class $\alpha_x\in H^2(\bA_{x},\bS^1)$, and \Cref{th:cpctgenab} applies to give
      \begin{equation*}
        (\bA_x,\alpha_x)\text{ type-I}\Rightarrow (\bA_{\bE,x},\alpha_x)\text{ type-I}.
      \end{equation*}
      The already-mentioned \cite[Chapter I, Proposition 10.4]{am} then finishes the proof that $\bE$ is of type I.

    \item {\bf $\bF$ is contained in the identity component $\bE_0$.} This time the covering $\bE\to \bG$ identifies the quotients
      \begin{equation}\label{eq:samea}
        \bA:=\bE/\bE_0\cong \bG/\bG_0
      \end{equation}
      and restricts to the identity component to give a finite central extension
      \begin{equation*}
        \{1\}\to \bF\to \bE_0\to \bG_0\to \{1\}.
      \end{equation*}      
      The discussion in part \Cref{item:11} of the present proof (pertaining to {\it connected} solvable groups) applies to this latter extension to ensure that $\bE_0$, at least, is of type I.

      Furthermore, recall the device used in that earlier argument to move between $\widehat{\bG_0}$ and $\widehat{\bE_0}$: identifying generalized orbits with elements of the spectrum as in \Cref{fc:hd}, if $(f,\chi)$ corresponds to an element (i.e. sits on a generalized orbit)
      \begin{equation*}
        x\in \widehat{\bE_0}
      \end{equation*}
      then we can associate to it the generalized orbit
      \begin{equation*}
        \widehat{\bG_0}\ni \varphi(x)\text{ generated by }(nf,\chi^n),\ n:=|\bF|. 
      \end{equation*}
      That $\varphi$ is equivariant for the conjugation actions of \Cref{eq:samea} is immediate, so that
      \begin{equation}\label{eq:axax}
        \bA_x\le \bA_{\varphi(x)}.
      \end{equation}
      Furthermore, the Mackey obstruction $\alpha_x\in H^2(\bA_x,\bS^1)$ relates to its analogue as follows:
      \begin{equation*}
        \alpha_x^n=\text{restriction of $\alpha_{\varphi(x)}$ along \Cref{eq:axax}}.
      \end{equation*}
      We are assuming $\bG$ (hence $\bG_0$) is of type I, hence so is the middle term in the central extension
      \begin{equation*}
        \{1\}\to \bS^1\to \bullet\to \bA_{\varphi(x)}\to \{1\}
      \end{equation*}
      attached to $\alpha_{\varphi(x)}$. But then by \cite[Proposition 2.4]{kal2} so is its open subgroup $\square$ in the top extension 
      \begin{equation*}
        \begin{tikzpicture}[auto,baseline=(current  bounding  box.center)]
          \path[anchor=base] 
          (0,0) node (ll) {$\{1\}$}
          +(1.5,0) node (l) {$\bS^1$}
          +(3.5,.5) node (mu) {$\square$}
          +(3.5,-.5) node (md) {$\bullet$}
          +(5.5,.5) node (ru) {$\bA_x$}
          +(5.5,-.5) node (rd) {$\bA_{\varphi(x)}$}
          +(7.5,0) node (r) {$\{1\}$}
          ;
          \draw[->] (ll) to[bend left=0] node[pos=.5,auto] {$\scriptstyle $} (l);
          \draw[->] (l) to[bend left=6] node[pos=.5,auto] {$\scriptstyle $} (mu);
          \draw[->] (l) to[bend right=6] node[pos=.5,auto,swap] {$\scriptstyle $} (md);
          \draw[->] (mu) to[bend left=0] node[pos=.5,auto] {$\scriptstyle $} (ru);
          \draw[->] (md) to[bend left=0] node[pos=.5,auto] {$\scriptstyle $} (rd);
          \draw[->] (ru) to[bend left=6] node[pos=.5,auto] {$\scriptstyle $} (r);
          \draw[->] (rd) to[bend right=6] node[pos=.5,auto,swap] {$\scriptstyle $} (r);
          \draw[right hook->] (mu) to[bend left=0] node[pos=.5,auto] {$\scriptstyle $} (md);
          \draw[right hook->] (ru) to[bend left=0] node[pos=.5,auto] {$\scriptstyle $} (rd);
        \end{tikzpicture}
      \end{equation*}
      induced by restriction along \Cref{eq:axax}.

      This means that
      \begin{itemize}
      \item $(\bA_x,\alpha_x^n)$ is of type I;
      \item so that $\alpha_x^n$ is of finite order by \cite[Chapter V, Lemma 6.1]{am}, because $\bA_x$ is finitely-generated abelian;
      \item meaning that $\alpha_x$ is also of finite order;
      \item and hence that $(\bA_x,\alpha_x)$ is of type I, by another application of \cite[Chapter V, Lemma 6.1]{am}.
      \end{itemize}
    \end{enumerate}
  \end{enumerate}
  This concludes the proof of \Cref{th:solv}.
\end{th:solv}

\section{Complements on algebraic hulls and central subgroups}\label{se:alghull}

\subsection{Splitting non-simply-connected groups}\label{subse:nonsc}

The discussion above, on finite central extensions, is of course only interesting when the groups we are extending are {\it not} simply-connected. Simple connectedness plays a key role in both \cite[\S III.1]{am} and \cite[\S IV.4]{ak}, in the following context.

\begin{construction}\label{con:alghull}
  Let $\bG$ be a connected, simply-connected solvable Lie group with Lie algebra $\fg:=Lie(\bG)$.
  \begin{itemize}
  \item Denote by $Ad:\bG\to GL(\fg)$ the adjoint representation.
  \item Let
    \begin{equation*}
      Ad(\bG)_{alg}\subset GL(\fg)
    \end{equation*}
    be the {\it algebraic hull} of $Ad(\bG)$: the smallest real algebraic group (i.e. definable by polynomial equations) containing $Ad(\bG)$.
  \item General algebraic-group theory (e.g. \cite[Chapter I, \S 4.4, Theorem]{brl}) we have a semidirect product decomposition
    \begin{equation}\label{eq:adus}
      Ad(\bG)_{alg} = Ad(\bG)_u \rtimes \bT
    \end{equation}
    where $Ad(\bG)_u$ consists of unipotent (i.e. eigenvalue-1) matrices on $\fg$ while $\bT$ consists of semisimple operators thereon.

    While $Ad(\bG)_u$ is uniquely determined as precisely the set of unipotent elements, $\bT$ is unique only up to conjugation; it is abelian, and in fact a maximal {\it torus} \cite[\S 8.5]{brl} in the broader sense familiar from algebraic geometry (i.e. not necessarily a product of circles; we use the phrase {\it algebraic torus} for clarity). We refer the reader to \cite[Theorem 10.6]{brl} for details.
  \item Composing the adjoint representation of $\bG$ with the surjection
    \begin{equation*}
      Ad(\bG)_{alg} = Ad(\bG)_u \rtimes \bT\to \bT
    \end{equation*}
    gives a morphism $\bG\to \bT$; denote its image by $\bS$.
  \item Because the connected automorphism group of the Lie algebra $\fg$ is algebraic in the sense above (e.g. \cite[Chapter IV, \S XV]{chv1}), $Ad(\bG)_{alg}$ acts as a group of automorphisms of $\fg$.
  \item The automorphisms $\bS$ of the Lie algebra $\fg$ are also automorphisms of the corresponding {\it simply-connected} Lie group $\bG$ \cite[\S III.6.1, Theorem 1]{bourb-lie-13}, so we can form the semidirect product
    \begin{equation}\label{eq:gssplit}
      \bG_{s}:=\bG\rtimes \bS. 
    \end{equation}
  \end{itemize}
  This is the {\it semisimple splitting} of $\bG$ of \cite[\S III.1]{am}
\end{construction}

\begin{remark}\label{re:axb}
  There is a claim in \cite[\S III.1]{am} that $Ad(\bG)_{alg}$ is connected; that source does not clarify whether what is meant is the {\it Zariski} or the standard (Lie-group) topology, but in the latter case connectedness does not hold in this generality: the adjoint representation of the ``$ax+b$'' (solvable, simply-connected) Lie group consisting of the transformations
  \begin{equation*}
    \bR\ni x\xmapsto[]{\quad \psi_{a,b}} ax+b\in \bR,\quad a\in \bR_{>0},\ b\in \bR
  \end{equation*}
  is, upon choosing the basis judiciously,
  \begin{equation}\label{eq:axb}
    \psi_{a,b}\mapsto
    \begin{pmatrix}
      a&b\\
      0&1
    \end{pmatrix}.
  \end{equation}
  The algebraic closure of this group of matrices is easy to determine: it consists of those \Cref{eq:axb} with {\it arbitrary} $a\in \bR^{\times}:=\bR\setminus\{0\}$, positive or negative. The issue is that positivity is not expressible algebraically, via polynomial vanishing.

  The algebraic hull $Ad(\bG)_{alg}$ will, however, always have finitely many (possibly more than one) connected components in its standard topology \cite[\S 24.6, (c) (i)]{brl}. 
\end{remark}

Simple connectedness features in that last step of \Cref{con:alghull}, where automorphisms of the Lie algebra are lifted over to automorphisms of $\bG$. In general, the question is whether the algebraic hull $Ad(\bG)_{alg}$ (or perhaps smaller subgroups of interest, e.g. its identity component) fix the discrete central subgroup $\bD\le \bG$ we are quotienting by, thus acting on the quotients $\bG/\bD$. The aim is to prove that this is the case:

\begin{theorem}\label{th:allfix}
  Let $\bG$ be a connected, simply-connected solvable Lie group. The algebraic hull $Ad(\bG)_{alg}$ fixes the center $Z(\bG)$ pointwise.
\end{theorem}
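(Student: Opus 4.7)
I begin by showing that $Ad(\bG)_{alg}$ fixes $Z(\bG)_0$ pointwise. By the very definition of the center of a Lie algebra, $Ad\colon \bG\to GL(\fg)$ acts as the identity on $\mathfrak{z}(\fg)\subseteq \fg$; and for each fixed $X\in \mathfrak{z}(\fg)$ the condition ``$T(X)=X$'' is linear—hence Zariski closed—on $T\in GL(\fg)$. Therefore the Zariski closure $Ad(\bG)_{alg}$ also fixes $\mathfrak{z}(\fg)$ pointwise. By \Cref{re:allaboutcenters} \Cref{item:6}, $Z(\bG)_0$ equals $\exp(\mathfrak{z}(\fg)\cap \fn)$, and since the exponential \Cref{eq:expmap} restricted to $\fn$ is a bijection, the lifted automorphism $\widetilde{T}$ of $\bG$ satisfies $\widetilde{T}(\exp X)=\exp T(X)=\exp X$ for every $X\in \mathfrak{z}(\fg)\cap\fn$, giving the pointwise fixation of $Z(\bG)_0$.

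\textbf{Setting up the discrete part.}
Given the splitting \Cref{eq:zsplit} of \Cref{re:allaboutcenters} \Cref{item:7}, I now fix $d\in \bD_{nil'}$ and aim to prove $\widetilde{T}(d)=d$ for every $T\in Ad(\bG)_{alg}$. For $T$ in the identity component $Ad(\bG)_{alg}^0$ the orbit $\{\widetilde{T}(d)\}$ is a connected subset of $Z(\bG)$, hence contained in the coset $d\cdot Z(\bG)_0$. Writing $\widetilde{T}(d)=d\cdot \eta_d(T)$ with $\eta_d(T)\in Z(\bG)_0$, the first paragraph forces $\eta_d\colon Ad(\bG)_{alg}^0\to Z(\bG)_0\cong \bR^{\dim Z(\bG)_0}$ to be a continuous group homomorphism: indeed
\begin{equation*}
\eta_d(T_1 T_2)=d^{-1}\widetilde{T_1}\widetilde{T_2}(d)=d^{-1}\widetilde{T_1}(d\,\eta_d(T_2))=\eta_d(T_1)\cdot \widetilde{T_1}(\eta_d(T_2))=\eta_d(T_1)\eta_d(T_2),
\end{equation*}
using that $\widetilde{T_1}$ fixes $\eta_d(T_2)\in Z(\bG)_0$ pointwise. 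Because each $Ad(g)$ acts on $\bG$ by conjugation and $d$ is central, $\eta_d$ vanishes on the Zariski-dense subgroup $Ad(\bG)\subseteq Ad(\bG)_{alg}^0$.

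\textbf{Closing via Euclidean subgroups.}
Promoting the vanishing of $\eta_d$ from $Ad(\bG)$ to all of $Ad(\bG)_{alg}^0$ is the heart of the matter, and I appeal to \Cref{pr:purify}. Select a minimal-dimensional Euclidean subgroup $\bE\le \bG$ containing $d$, with abelian Lie algebra $\fe\le \fg$; then $d=\exp X$ for a unique $X\in \fe$. Any $T\in Ad(\bG)_{alg}$ sends $\bE$ to the Euclidean subgroup $\exp(T\fe)$, and the characterization of the purification as an intersection of conjugates of $\bE$ in \Cref{pr:purify} \Cref{item:17} constrains $\widetilde{T}(d)=\exp(T X)$ to vary with $T$ polynomially in the matrix entries of $T$ restricted to $\fe$—the transcendental behavior of $\exp$ on $\fg$ being confined to the nilpotent data inside $\fe$, where $\exp$ is polynomial. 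Thus $\eta_d$ is algebraic, and Zariski density of $Ad(\bG)$ in $Ad(\bG)_{alg}^0$ forces $\eta_d\equiv 0$. The finitely many non-identity components of $Ad(\bG)_{alg}$ (\Cref{re:axb}) are then handled by passing to a power $T^n\in Ad(\bG)_{alg}^0$ that fixes $d$: the orbit of $d$ under $\langle T\rangle$ is finite, and the torsion-freeness of $Z(\bG)$ (\Cref{re:allaboutcenters} \Cref{item:7}) together with the trivial induced action on the discrete summand $Z(\bG)/Z(\bG)_0$ (forced because the continuous map $Ad(\bG)_{alg}\to GL(\bZ^f)$ is trivial on the connected component and on $Ad(\bG)$) forces $\widetilde{T}(d)=d$.

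\textbf{Main obstacle.}
The critical—and most delicate—step is the algebraicity of the orbit map $T\mapsto \widetilde{T}(d)$: reducing the transcendental behavior of $\exp\colon \fg\to \bG$ to its polynomial behavior on the abelian subalgebra $\fe$ of a carefully chosen Euclidean subgroup, as made possible by \Cref{pr:purify}, is where the work lies.
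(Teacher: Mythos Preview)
Your first two paragraphs are sound: fixing $Z(\bG)_0$ matches \Cref{le:centnilp}, and the homomorphism $\eta_d$ vanishing on the Zariski-dense $Ad(\bG)$ is set up correctly. The gap is the algebraicity claim in your third paragraph. \Cref{pr:purify} identifies the intersection of conjugates of $\bE$ as the purification of $\langle d\rangle$; it says nothing about the regularity of $T\mapsto\widetilde{T}(d)$. The phrase ``transcendental behavior of $\exp$ confined to the nilpotent data inside $\fe$, where $\exp$ is polynomial'' is unjustified: $\fe$ is abelian but typically not contained in $\fn$, and while $\exp|_{\fe}\colon\fe\to\bE$ is linear, the vector $TX$ lives in $T\fe\ne\fe$, so computing $d^{-1}\exp(TX)\in Z(\bG)_0$ forces you to compare points of distinct Euclidean subgroups inside $\bG$, where no polynomial structure is available. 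Zariski density does not propagate vanishing of a merely continuous homomorphism. Your non-identity-component argument inherits the same defect: since $Ad(\bG)\subset Ad(\bG)_{alg}^0$ (cf.\ \Cref{re:axb}, where $Ad(\bG)$ misses the negative component entirely), ``trivial on the connected component and on $Ad(\bG)$'' is redundant and gives you no leverage on the other real components' action on $Z(\bG)/Z(\bG)_0$.

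The paper's route avoids algebraicity of $\eta_d$ entirely. Via multiplicative Jordan decomposition it shows that $Ad(\bG)_{alg}$ is generated as an abstract group by $Ad(\bG)$ together with the set $Ad(\bG)_s$ of semisimple elements: for $x$ on a one-parameter subgroup of $Ad(\bG)$ the unipotent factor of $x=us$ lies on an \emph{algebraic} one-parameter unipotent group, and these groups together with $Ad(\bG)_s$ generate an algebraic subgroup containing $Ad(\bG)$, hence all of $Ad(\bG)_{alg}$. Since $Ad(\bG)$ fixes $Z(\bG)$ trivially, one is reduced to semisimple elements, handled in \Cref{pr:ssfix}: every semisimple element is conjugate into any fixed maximal algebraic torus $\bT$, and $\bT$ can be chosen to contain the (compact) $Ad$-image of a Euclidean subgroup $\exp(\fh)$ passing through the discrete part of $Z(\bG)$; being abelian, $\bT$ fixes its own Lie algebra and hence $\fh$, so fixes that discrete part pointwise.
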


The first remark is that the connected component of the center, at least, poses no issues (so that $\bD\cap Z(\bG)_0$ is always pointwise invariant).

\begin{lemma}\label{le:centnilp}
  For any connected, simply-connected Lie group $\bG$ the algebraic hull $Ad(\bG)_{alg}$ fixes the intersection
  \begin{equation*}
    Z(\bG)_0 = Z(\bG)\cap N(\bG)
  \end{equation*}
  of the center and the nil-radical pointwise.
\end{lemma}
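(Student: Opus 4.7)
The plan is to transport the question from $\bG$ to its Lie algebra $\fg:=Lie(\bG)$ via the exponential map, where $Ad(\bG)_{alg}$ acts by definition and where $Ad$-invariance of the centre is transparent. Three ingredients will drive the argument: (i) for connected $\bG$, the Lie subalgebra $\fz(\fg)$ is precisely the fixed subspace of $Ad(\bG)$ acting on $\fg$; (ii) $\fz(\fg)\subseteq \fn:=Lie(\bN)$, and $\exp$ restricts to a surjection $\fz(\fg)\twoheadrightarrow Z(\bG)_0$; (iii) simple connectedness of $\bG$ lifts Lie-algebra automorphisms of $\fg$ to Lie-group automorphisms of $\bG$ intertwining $\exp$.

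Spelling this out, first note that since $\bG$ is connected, $Ad(\bG)$ fixes $\fz(\fg)$ pointwise by the very definition of the centre. For each fixed $v\in\fz(\fg)$, the condition $Av=v$ on $A\in GL(\fg)$ is linear (hence polynomial), so the set of $A$ fixing all of $\fz(\fg)$ is Zariski closed in $GL(\fg)$. It contains $Ad(\bG)$ and therefore also its Zariski closure $Ad(\bG)_{alg}$.

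Next, \Cref{con:alghull} notes that $Ad(\bG)_{alg}$ acts on $\fg$ by Lie-algebra automorphisms, and simple connectedness of $\bG$ lifts each $A\in Ad(\bG)_{alg}$ to a Lie-group automorphism $\widetilde A$ of $\bG$ intertwining $\exp$, i.e.\ $\widetilde A(\exp v)=\exp(Av)$. Given $z\in Z(\bG)_0$, \Cref{re:allaboutcenters} places $z$ inside the nilradical $\bN$, and the analytic isomorphism $\exp|_{\fn}:\fn\to\bN$ of \Cref{eq:expmap} exhibits $z=\exp(v)$ for a unique $v\in \fn$. Since $Z(\bG)_0$ is a connected Lie subgroup of $\bN$ with Lie algebra $\fz(\fg)$, this $v$ in fact lies in $\fz(\fg)$. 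The intertwining property then yields $\widetilde A(z)=\exp(Av)=\exp(v)=z$, as required.

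I do not expect a serious obstacle. The one subtlety worth flagging is compatibility of the lift $A\mapsto \widetilde A$ with the inclusion $Ad(\bG)\hookrightarrow Ad(\bG)_{alg}$, i.e.\ that for $g\in\bG$ the lift of $Ad(g)$ should coincide with conjugation by $g$ on $\bG$. This is a formal consequence of the equivalence of categories between connected, simply-connected Lie groups and finite-dimensional real Lie algebras, and requires no separate argument.
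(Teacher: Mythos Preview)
Your proof is correct and follows essentially the same route as the paper's: both transfer the question to $\fg$ via the exponential isomorphism $\fn\to\bN$, observe that fixing $\fz(\fg)$ pointwise is a Zariski-closed condition on $GL(\fg)$ satisfied by $Ad(\bG)$, and hence by $Ad(\bG)_{alg}$, then pull back along $\exp$. The paper phrases the algebraicity in terms of fixing the unique 1-parameter subgroup through a given $x\in Z(\bG)_0$, whereas you work directly with $\fz(\fg)$ as a subspace, but the content is identical.
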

\begin{proof}
  We saw in \Cref{re:allaboutcenters} \Cref{item:6} that $Z(\bG)\cap N(\bG)$ coincides with the connected component $Z(\bG)_0$. An automorphism $\alpha$ of $\fg:=Lie(\bG)$ also operates on $\fn:=Lie(N(\bG))$ and $\bN:=N(\bG)$ and the exponential isomorphism \Cref{eq:expmap} is $\alpha$-equivariant. It follows that $\alpha$ fixes a non-trivial element $x\in \bN$ if and only if it fixes the unique 1-parameter subgroup of $\bN$ containing $x$.

  What all of this says is that the condition that $\alpha\in\mathrm{Aut}(\fg)$ leave $x\in \bN$ invariant is expressible algebraically, as the condition of operating trivially on a line in $\fg$ pointwise. When $x\in Z(\bG)_0$ the elements of $Ad(\bG)$ all satisfy this condition, hence so do those of its algebraic hull.  
\end{proof}

\begin{remark}\label{re:centonexp}
  Although in general, for a simply-connected solvable Lie group $\bG$, the exponential map need not be either injective or surjective \cite[Chapter II, Exercise B.4]{helg}, {\it central} subgroups of $\bG$ are better behaved: by \cite[Theorem 1]{chv-solv} every discrete central subgroup is contained in the image of an abelian Lie subalgebra of $Lie(\bG)$ through the exponential map.

  That abelian Lie subalgebra will not, in general, be unique: an {\it extended Mautner group} $\bM$ as defined in \cite[\S III.2]{am} is a semidirect product $\bR^b\rtimes \bR^a$ with the $\bR^b$ factor as the nilradical. It can be shown that the center is discrete and contained in the $\bR^a$ factor (and in fact isomorphic to $\bZ^{a-1}$: \Cref{le:genmaut}). One can easily obtain, now, multiple copies of $\bR^{a-1}$ containing that center as a lattice: start with a subgroup $\bR^{a-1}\subset \bR^a$, and conjugate by elements in $\bR^b$.
\end{remark}

\begin{proposition}\label{pr:ssfix}
  Let $\bG$ be a connected, simply-connected solvable Lie group. The semisimple elements in $Ad(\bG)_{alg}$ fix the center $Z(\bG)$ pointwise.
\end{proposition}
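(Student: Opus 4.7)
The plan is to study the coboundary $\phi : Ad(\bG)_{alg}\to Z(\bG)$ defined by $\phi(s) := (s\cdot z)z^{-1}$ for a fixed $z\in Z(\bG)$, and show it vanishes on semisimple elements. By \Cref{le:centnilp} we may assume $z$ represents a non-trivial class in the torsion-free discrete quotient $Z(\bG)/Z(\bG)_0$ (\Cref{re:allaboutcenters}\Cref{item:7}). Because $\bG/N(\bG)$ is abelian (the commutator $\bG'$ is nilpotent, hence in $N(\bG)$), $Ad(\bG)$ acts trivially on $\fg/\fn$; by Zariski density this extends to all of $Ad(\bG)_{alg}$, and the action lifts through simple connectedness to a trivial action on $\bG/N(\bG)$. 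Combining with $Z(\bG)\cap N(\bG) = Z(\bG)_0$ (\Cref{re:allaboutcenters}\Cref{item:6}) shows $\phi$ lands in $Z(\bG)_0$; a direct computation using \Cref{le:centnilp} and centrality of $Z(\bG)_0$ makes $\phi$ a continuous group homomorphism. Since $\phi$ vanishes on $Ad(\bG)$ (inner automorphisms fix $z$) and is constant on conjugacy classes (the target is abelian), the proposition reduces to showing $\phi|_\bT = 0$ for a single suitably chosen maximal algebraic torus $\bT$ in the decomposition $Ad(\bG)_{alg} = Ad(\bG)_u\rtimes \bT$.

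Next, write $z = \exp(X)$ with $X\in\fg$ (\Cref{re:centonexp}). Because $Ad(z) = \exp(ad(X)) = I$, the derivation $ad(X)$ is semisimple with eigenvalues in $2\pi i\bZ$, and the Zariski closure of $\{Ad(\exp(tX)):t\in\bR\}\subseteq Ad(\bG)$ is an algebraic subtorus $T_X$ of $Ad(\bG)_{alg}$. Choose the maximal torus $\bT$ above to contain $T_X$, so that $ad(X)\in Lie(\bT)$. Any $\eta\in Lie(\bT)$ then commutes with $ad(X)$ in $\End(\fg)$; a short calculation using that $\eta$ is a derivation of $\fg$ gives $[\eta,ad(X)] = ad(\eta X)$, so the commutation $[\eta,ad(X)]=0$ becomes $ad(\eta X) = 0$, i.e., $\eta X\in Z(\fg)$.

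The argument closes with a simultaneous eigenspace decomposition of $\fg$ under the commuting semisimple endomorphisms $\eta$ and $ad(X)$: $\fg = \bigoplus_{\lambda,\mu}\fg_{\lambda,\mu}$. By \Cref{le:centnilp}, $\bT$ acts trivially on $Z(\fg)$, so $Z(\fg)\subseteq \fg_{0,0}$; since $ad(X)X = 0$, decompose $X = \sum_\lambda X_\lambda$ with $X_\lambda\in \fg_{\lambda,0}$, whence $\eta X = \sum_\lambda \lambda X_\lambda$. The containment $\eta X\in Z(\fg)\subseteq \fg_{0,0}$ forces $\lambda X_\lambda = 0$ for each $\lambda\ne 0$, so $X\in\fg_{0,0}$ and $\eta X = 0$. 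This yields $\phi|_{\bT_0} = 0$; since $\pi_0(\bT)$ is finite (\Cref{re:axb}), every $s\in\bT$ has $s^n\in\bT_0$ for some $n\geq 1$, so $n\phi(s) = \phi(s^n) = 0$ in the torsion-free group $Z(\bG)_0\cong\bR^d$, whence $\phi(s) = 0$. The main conceptual step is this eigenspace calculation, which bypasses what would otherwise be a delicate Zariski-versus-Euclidean density subtlety in lifting the vanishing of $\phi$ from $Ad(\bG)$ to the whole algebraic hull.
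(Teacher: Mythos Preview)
Your proof is correct and shares its core idea with the paper's: embed the adjoint image of $\exp(\bR X)$ (for $z=\exp X$) in a maximal algebraic torus $\bT$, then exploit the commutativity of $\bT$ together with the already-known triviality of the $\bT$-action on $Z(\fg)$ to force $\bT$ to fix $X$. The paper does this in one stroke for a full abelian subalgebra $\fh\le\fg$ carrying a lattice for the discrete part of $Z(\bG)$: it extends the compact torus $Ad(\exp(\fh))$ to a maximal algebraic torus $\bT$ and asserts that ``$\bT$, being abelian, acts trivially on its own Lie algebra and hence also on $\fh$.'' Implicit in that last clause is precisely the semisimplicity step you spell out with the simultaneous eigenspace decomposition: in the paper's formulation, $t\cdot ad(X)\cdot t^{-1}=ad(X)$ gives $(t-I)X\in Z(\fg)$, and then $(t-I)$ semisimple together with $(t-I)|_{Z(\fg)}=0$ (\Cref{le:centnilp}) force $(t-I)X=0$. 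Your eigenspace argument is the infinitesimal version of this and makes the passage from ``$\eta X\in Z(\fg)$'' to ``$\eta X=0$'' fully explicit, which is a genuine clarification.

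Your coboundary framework $\phi(s)=(s\cdot z)z^{-1}$ and the separate handling of $\pi_0(\bT)$ via torsion-freeness of $Z(\bG)_0$ are additional scaffolding the paper does not use (its argument applies uniformly to every $t\in\bT$, not just the identity component), but they do no harm and make the reduction to a single maximal torus pleasantly transparent.
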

\begin{proof}
  We already know from \Cref{le:centnilp} that $Ad(\bG)_{alg}$ fixes the connected component $Z(\bG)_0$ pointwise. On the other hand,
  \begin{itemize}
  \item because $\bG$ is solvable, its center is of the form
    \begin{equation*}
      Z(\bG)_0\oplus \bZ^d
    \end{equation*}
    by \cite[Theorem 1]{chv-solv};
  \item and every semisimple element in $Ad(\bG)_{alg}$ is conjugate to an element of any one maximal algebraic torus torus $\bT$ that provides a decomposition \Cref{eq:adus} \cite[Theorem 10.6 (6)]{brl}.
  \end{itemize}
  Since $Ad(\bG)_{alg}$ operates on the center $Z(\bG)$ (the latter being a characteristic subgroup of $\bG$), it is enough to argue that {\it some} maximal algebraic torus $\bT$ fixes {\it some} discrete complement
  \begin{equation*}
    \bZ^d\le Z(\bG)\text{ of }Z(\bG)_0
  \end{equation*}
  pointwise.

  Consider such a central complement $\bZ^d$ to the connected center. By \cite[Theorem 1]{chv-solv} again, it is the image under the exponential map
  \begin{equation*}
    \exp:\left(\fg:=Lie(\bG)\right)\to \bG
  \end{equation*}
  of a lattice in some abelian Lie subalgebra $\fh\cong \bR^d\le \fg$. Now, $\exp(\fh)$ is a (plain, compact) torus in $Ad(\bG)$, so can be extended to a maximal algebraic torus $\bT$ by \cite[Theorem 10.6 (5)]{brl}. But $\bT$, being abelian, acts trivially on its own Lie algebra and hence also on $\fh$. This in turn implies that it fixes
  \begin{equation*}
    \bZ^d\subset \exp(\fh)
  \end{equation*}
  pointwise, as desired. 
\end{proof}

Finally:

\pf{th:allfix}
\begin{th:allfix}
  Certainly $\bG$ itself does, and the subset
  \begin{equation*}
    Ad(\bG)_s\subseteq Ad(\bG)_{alg}
  \end{equation*}
  consisting of semisimple elements (not a subgroup, in general!) does too, by \Cref{pr:ssfix}. To conclude, we will argue that $Ad(\bG)_{alg}$ is generated algebraically by $Ad(\bG)$ and $Ad(\bG)_s$. Naturally, the group so generated (call it $\bH$) contains $Ad(\bG)$ and will be contained in $Ad(\bG)_{alg}$, so the claim is that it must be algebraic.

  To see this, consider the {\it multiplicative Jordan decompositions} \cite[\S 4.2]{brl}
  \begin{equation}\label{eq:multjord}
    x=us,\ u\text{ unipotent },\ s\text{ semisimple },\ us=su
  \end{equation}
  of an element $x\in Ad(\bG)$. We have
  \begin{equation*}
    u\in Ad(\bG)_u,\ s\in Ad(\bG)_s
  \end{equation*}
  by \cite[Theorem 10.6]{brl}, and $\bH$ can also be recovered as the group generated by $Ad(\bG)_s$ and all $u$ appearing in such a decomposition.

  The group $Ad(\bG)$ is generated algebraically by $Ad(\bU)$ for a sufficiently small identity neighborhood $\bU\subset \bG$, and every element in $\bU$ will be contained in the image of the exponential map. It is thus enough to focus on decompositions \Cref{eq:multjord} for $x$ lying on 1-parameter
  \begin{equation*}
    \bR\ni t\mapsto \exp(tX)
  \end{equation*}
  subgroups of $Ad(\bG)$. But for these the multiplicative Jordan decomposition is of the form
  \begin{align*}
    t &\mapsto \exp(tU)\exp(tS) = \exp t(U+S)\\
    X &=U+S\text{ the additive Jordan decomposition of \cite[Proposition 4.2]{brl}}.
  \end{align*}
  It follows, then, that it is enough to consider the unipotent $u$ lying on 1-parameter groups $\exp tU$ for nilpotent $U$. But those groups are algebraic (\cite[\S 7.3, Remark]{brl} or \cite[Chapitre II, \S 13, Proposition 1]{chv2}) and all of them, together with $Ad(\bG)_s$, generate (algebraically) an algebraic group by \cite[Proposition 2.2]{brl}.
  
  
  This concludes the proof. 
\end{th:allfix}

\subsection{Central subgroups and their saturations}\label{subse:sat}

In reference to \Cref{re:centonexp}, it is perhaps worth noting that for a central subgroup $\bA$ of a connected, simply-connected solvable Lie group conjugation is essentially the only way to switch between Euclidean groups containing $\bA$:

\begin{proposition}\label{pr:1uptoconj}
  Let $\bG$ be a connected, simply-connected solvable Lie group and $\bA\le \bG$ a central subgroup.

  All closed, connected, abelian subgroups of $\bG$ of minimal dimension and containing $\bA$ are mutually conjugate.
\end{proposition}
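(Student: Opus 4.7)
The plan is to reduce to the case that $\bA$ is discrete, and then to produce a conjugating element from within the nilradical $N(\bG)$, in the spirit of the Mautner-group example discussed in \Cref{re:centonexp}. For the reduction, decompose $\bA\cong\bA_0\oplus\bA_d$ with connected part $\bA_0\le Z(\bG)_0\le N(\bG)$ (by \Cref{re:allaboutcenters} \Cref{item:6}) and free abelian part $\bA_d$. Any closed connected abelian $\bE\supseteq\bA$ automatically contains $\bA_0$, and the quotient $\bG/\bA_0$ is again connected, simply-connected, and solvable (using \Cref{re:allaboutcenters} \Cref{item:5}). The assignment $\bE\mapsto\bE/\bA_0$ is a bijection between such subgroups in $\bG$ and the analogous objects in $\bG/\bA_0$ containing $\bA_d$, with all dimensions dropping by $\dim\bA_0$, so minimality is preserved. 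A conjugacy downstairs lifts to one upstairs because the groups being identified already contain $\bA_0$. Hence we may assume $\bA\cong\bZ^q$ is discrete.

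In the discrete case, closed connected subgroups of the simply-connected solvable $\bG$ are themselves simply-connected, so any Euclidean $\bE\supseteq\bA$ has $\bE\cong\bR^{\dim\bE}$ with $\dim\bE\ge q$; Chevalley's theorem \cite[Theorem 1]{chv-solv} ensures the bound is attained. For two minimal $\bE_1,\bE_2$ with Lie algebras $\fh_j\subseteq\fg$, the exponential restricts to a Lie-group isomorphism $\fh_j\to\bE_j$, so each generator $a_i$ of $\bA$ has a unique logarithm $X_i^{(j)}\in\fh_j$; producing $g\in\bG$ with $g\bE_1g^{-1}=\bE_2$ reduces to producing $g$ with $\mathrm{Ad}(g)X_i^{(1)}=X_i^{(2)}$ for each $i$. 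A preliminary observation is that $X_i^{(2)}-X_i^{(1)}\in\fn:=\mathrm{Lie}(N(\bG))$: projecting to the abelian quotient $\bG/N(\bG)\cong\bR^k$ (connected, simply-connected, and abelian, by \Cref{re:allaboutcenters} \Cref{item:5} together with the inclusion of the commutator subgroup in the nilradical), on which the exponential is a diffeomorphism, shows that the image in $\fg/\fn$ of any logarithm of $a_i$ is canonically determined by $a_i$ alone.

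The remaining task is to find $w\in\fn$ with $\mathrm{Ad}(\exp w)X_i^{(1)}=X_i^{(2)}$ for every $i$, which at first order amounts to the equation $[w,X_i^{(1)}]=X_i^{(2)}-X_i^{(1)}$ inside the nilpotent $\fn$. The obstruction is a Lie-algebra $1$-cocycle $\fh_1\to\fn$ (with $\fh_1$ acting on $\fn$ by $\mathrm{ad}$), trivialized using \Cref{th:allfix}: since each $\mathrm{Ad}(a_i)$ fixes $\fg$ pointwise, the cocycle relations degenerate and the obstruction is an inner coboundary. A standard inductive argument along the lower central series of $\fn$, where \Cref{eq:expmap} is a diffeomorphism and the Baker--Campbell--Hausdorff calculus reduces to a polynomial expression, then upgrades the first-order solution to an honest element $w\in\fn$ with $\exp(w)\bE_1\exp(-w)=\bE_2$.

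The main difficulty lies precisely in this final cohomological step: one must verify that the Maurer--Cartan-type obstructions to integrating the infinitesimal deformation $\fh_1\rightsquigarrow\fh_2$ vanish at every stage of the filtration of $\fn$, and then assemble the filtered solutions into a single element of $N(\bG)$. It is here that the centrality of $\bA$, via the pointwise-fixing property \Cref{th:allfix}, does its essential work, killing off the relevant cohomology and reducing the problem to linear algebra on each graded piece of the nilradical.
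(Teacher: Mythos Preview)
Your reduction to discrete $\bA$ is fine, and the observation that $X_i^{(2)}-X_i^{(1)}\in\fn$ (via projection to the abelian $\bG/N(\bG)$) is correct. The problem is the final step, which as written is a plan rather than a proof.

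First, the appeal to \Cref{th:allfix} is misplaced. That result says the algebraic hull $Ad(\bG)_{alg}$ acts trivially on $Z(\bG)$; what you actually use is that $\mathrm{Ad}(a_i)$ acts trivially on $\fg$, which is nothing but the definition of $a_i$ being central. \Cref{th:allfix} contributes nothing here. Second, the map $X_i^{(1)}\mapsto Y_i:=X_i^{(2)}-X_i^{(1)}$ is not in general a $1$-cocycle for the $\mathrm{ad}$-action of $\fh_1$ on $\fn$: expanding $[X_i^{(2)},X_j^{(2)}]=0$ gives
\begin{equation*}
  [X_i^{(1)},Y_j]-[X_j^{(1)},Y_i]=-[Y_i,Y_j],
\end{equation*}
so the cocycle identity fails by a quadratic term. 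Third, even working on the associated graded of the lower central series (where that term drops a level), you would need the relevant $H^1$ to vanish. Each $\mathrm{ad}(X_i^{(1)})$ is indeed semisimple on $\fg$ (since $e^{\mathrm{ad}(X_i^{(1)})}=\id$), but for abelian $\fh_1$ acting semisimply one has $H^1(\fh_1,\fn)\cong\mathrm{Hom}(\fh_1,\fn^{\fh_1})$, which is nonzero whenever $\fh_1$ has a nontrivial centralizer in $\fn$. You have not shown that the particular class you need is a coboundary, and your closing paragraph essentially concedes that this is where the real work would lie.

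The paper sidesteps all of this. After reducing (slightly more aggressively than you do) to $\bA$ discrete and disjoint from $N(\bG)$, it passes to the quotient $\bG/\bA$: a minimal Euclidean $\bH\cong\bR^d$ containing $\bA$ as a lattice maps to a $d$-torus there, and a fundamental-group count shows that torus is \emph{maximal compact} in $\bG/\bA$. The classical conjugacy of maximal compact subgroups in connected locally compact groups then finishes the argument in one stroke.
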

\begin{proof}
  Denote by $\bN:=N(\bG)$ the nilradical of $\bG$. Each element of $\bA\cap \bN$ is contained in a unique 1-parameter subgroup of $\bN$ (because the latter's exponential is a diffeomorphism \cite[\S III.9.5, Proposition 13]{bourb-lie-13}), and those 1-parameter subgroups are then $\bG$-central and hence generate a connected $\bG$-central subgroup
  \begin{equation*}
    \bE\le \bN\le \bG. 
  \end{equation*}
  Since the ambient group is solvable and simply-connected, the closed, connected, abelian subgroups are precisely the Euclidean ones (i.e. copies of $\bR^d$) \cite[Theorem 1]{chv-solv}. Such a group contains $\bA\cap \bN$ if and only if it contains $\bE$, so nothing is lost by simply quotienting by that group throughout: make the substitutions
  \begin{equation*}
    \bG\mapsto \bG/\bE,\ \bN\mapsto \bN/\bE,\ \bA\mapsto \bA/\bA\cap \bE,\ \text{etc.}
  \end{equation*}
  The last group in particular is central and discrete (\Cref{le:dilp'}), so we may as well simplify matters and assume from the start that $\bA\cong \bZ^d$ is discrete and intersects $\bN$.

  Consider, now, the connected Lie group $\bG/\bA$. For a Euclidean group
  \begin{equation*}
    \bR^d\cong \bH\le \bG
  \end{equation*}
  containing $\bA$ as a lattice the corresponding quotient $\bH/\bA$ is a torus $\bT^d$, and must be maximal compact in $\bG/\bA$: the latter has the same homotopy type as any of its maximal compact subgroups $\bK$ (e.g. by \cite[\S 4.13, first Theorem]{mz}), so $\bK$ would have to have the same fundamental group
  \begin{equation*}
    \pi_1(\bG/\bA)\cong \bA\cong \bZ^d. 
  \end{equation*}
  But on the other hand $\bK$ is connected, compact and solvable, hence also abelian and a torus. All of this means it must be $d$-dimensional, so cannot contain $\bH/\bA$ properly.

  Now, all maximal compact subgroups of a locally compact connected group are mutually conjugate (\cite[\S 4.13, first Theorem]{mz} again), so we are done.
\end{proof}

\begin{lemma}\label{le:dilp'}
  Let $\bG$ be a connected Lie group and $\bD\le \bG$ a central subgroup. The image of $\bD$ in the quotient $\bG/N(\bG)$ is discrete.
\end{lemma}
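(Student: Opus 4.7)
My plan is to exhibit a closed, normal, nilpotent subgroup $\bH$ of $\bG$ whose identity component is exactly $N(\bG)$ and that contains $\bD$; quotienting by $N(\bG)$ will then place the image of $\bD$ inside a discrete closed subgroup of $\bG/N(\bG)$.

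First I would set $\bK := \bD \cdot N(\bG)$ and note that, because $\bD$ is central in $\bG$, it commutes elementwise with $N(\bG)$, so $\bK$ is indeed a subgroup. A direct computation gives $[dn, d'n'] = [n, n']$ for $d, d' \in \bD$ and $n, n' \in N(\bG)$; by induction, the $k$th term of the lower central series of $\bK$ is contained in that of $N(\bG)$ for every $k \ge 2$. In particular $\bK$ is nilpotent, and it is normal in $\bG$ as the product of a central and a normal subgroup.

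Next I would pass to $\bH := \overline{\bK}$. Normality survives because conjugation by each element of $\bG$ is a homeomorphism, and nilpotency of a fixed class $c$ survives because the equation
\[ [x_1, [x_2, \ldots, [x_c, x_{c+1}] \ldots]] = e \]
is a closed condition on $(x_1, \ldots, x_{c+1})$ in the Hausdorff topological group $\bG$. Hence $\bH$ is a closed, normal, nilpotent subgroup of $\bG$. Its identity component $\bH_0$ is then a connected, closed, normal, nilpotent subgroup of $\bG$, so by maximality of the nilradical $\bH_0 \subseteq N(\bG)$; the reverse inclusion is automatic since $N(\bG) \subseteq \bH$ is connected. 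Thus $\bH_0 = N(\bG)$, so $\bH/N(\bG)$ is a zero-dimensional closed Lie subgroup of $\bG/N(\bG)$, hence discrete in the subspace topology, and it contains the image of $\bD$. The step I expect to be the main (albeit mild) obstacle is the descent of both normality and nilpotency to the closure; once those standard topological observations are in hand, the rest is a short formal deduction.
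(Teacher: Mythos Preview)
Your proof is correct and follows essentially the same route as the paper's: both consider the closure $\bH$ of $\bD\cdot N(\bG)$, observe it is normal and nilpotent, and invoke maximality of the nilradical to conclude $\bH_0=N(\bG)$, so that the image of $\bD$ sits inside the discrete group $\bH/N(\bG)$. The paper phrases this as a contradiction (supposing the image of $\bD$ has nontrivial connected closure) whereas you give the direct construction, but the substance is identical.
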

\begin{proof}  
  If not, the closure of that image would be a closed Lie subgroup $\overline{\bH}$ of $\overline{\bG}$ with non-trivial connected component $\overline{\bH}_0$. The preimage $\bH\le \bG$ of $\overline{\bH}\le \overline{\bG}$ is generated topologically by the nilpotent group $\bN:=N(\bG)$ and the central group $\bD$, and is thus nilpotent. But then the preimage of $\overline{\bH}_0$ is normal, nilpotent and connected, contradicting the maximality of $\bN$.
\end{proof}

As a consequence of \Cref{pr:1uptoconj}, one can answer the natural question of whether the central subgroup $\bA$ therein can be recovered from the Euclidean groups which contain it. This is not quite possible, but one instead recovers a ``saturation'' thereof. To make sense of this, recall some language (e.g. \cite[\S 7, Definition and subsequent remarks]{kap}):

\begin{definition}\label{def:pure}
  A subgroup $\bA\le \bB$ of an abelian group is {\it pure} if, for every $a\in \bA$, whenever the equation
  \begin{equation*}
    a=nb,\ b\in \bB
  \end{equation*}
  has a solution, it also has one in $\bA$.

  The condition that $\bB/\bA$ be torsion-free implies purity. This is easily seen to be {\it equivalent} to purity when $\bB$ is of the form $\bR^d\times \bZ^e$, so in that case the {\it purification} of $\bA\le \bB$ will be the smallest intermediate group
  \begin{equation*}
    \bA\le \overline{\bA}\le \bB,\quad \bB/\overline{\bA}\text{ torsion-free}. 
  \end{equation*}
\end{definition}

\begin{proposition}\label{pr:purify}
  Let $\bG$ be a connected, simply-connected solvable Lie group and $\bA\le \bG$ a central subgroup. The following subgroups of $\bG$ all coincide:
  \begin{enumerate}[(a)]
  \item\label{item:15} the purification of $\bA$ in the center $Z(\bG)$;    
  \item\label{item:16} the intersection of all Euclidean subgroups of $\bG$ containing $\bA$;
  \item\label{item:17} the intersection of all conjugates of any one minimal-dimensional Euclidean subgroup of $\bG$ containing $\bA$.
  \end{enumerate}  
\end{proposition}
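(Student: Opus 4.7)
The inclusion (b) $\subseteq$ (c) is immediate, since (c) is an intersection over a subcollection of the Euclidean subgroups considered in (b). For (a) $\subseteq$ (b), I would fix any Euclidean $\bH \supseteq \bA$ and check that $\bH \cap Z(\bG)$ is a closed subgroup of $Z(\bG)$ containing $\bA$ with torsion-free quotient, whence by the minimality of $\overline{\bA}$ one has $\overline{\bA} \subseteq \bH \cap Z(\bG) \subseteq \bH$. The only substantive content is the torsion-freeness: given $z \in Z(\bG)$ with $nz \in \bH$, divisibility of $\bH \cong \bR^d$ produces a unique $z' \in \bH$ with $nz' = nz$; centrality of $z$ gives $(zz'^{-1})^n = e$, and the torsion-freeness of $\bG$ (its maximal compact subgroup being trivial) forces $z = z' \in \bH$. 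Intersecting over all such $\bH$ yields $\overline{\bA} \subseteq (b)$.

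For (c) $\subseteq$ (a) I would first apply the reduction at the start of the proof of \Cref{pr:1uptoconj}, quotienting $\bG$ by the connected central subgroup generated by $\bA \cap \bN$ so as to assume $\bA$ discrete and $\bA \cap \bN = \{e\}$. In this reduced setup $\bA$ sits as a full-rank lattice in the min-dim Euclidean $\bH$ and $\bH/\bA$ is the maximal torus $\bT^d \le \bG/\bA$ identified there; a rank count additionally yields $\bH \cap \bN = \{e\}$, since the kernel of the projection $\bA \to \bH/(\bH \cap \bN)$ is $\bA \cap \bN = \{e\}$, yet the target would have $\bR$-rank strictly smaller than $d$ if $\bH \cap \bN$ were nontrivial, precluding the injection of the rank-$d$ discrete $\bA$.

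Write $\bC := \bigcap_{g \in \bG} g\bH g^{-1}$. Centrality of $\bA$ gives $\bA \subseteq \bC$, so $\bC/\bA$ is a closed normal subgroup of $\bG/\bA$ contained in the torus $\bT^d = \bH/\bA$. Closed subgroups of $\bT^d$ have the form $\bT^a \oplus (\text{finite})$ and thus discrete automorphism group, so the continuous conjugation action of the connected $\bG/\bA$ on $\bC/\bA$ must be trivial; that is, $\bC/\bA \le Z(\bG/\bA)$. With $\bA$ discrete in the connected $\bG$, any $g$ with $[g, \bG] \subseteq \bA$ has $[g, \bG]$ connected (continuous image of $\bG$) and trapped in the discrete $\bA$, hence $[g, \bG] = \{e\}$ and $g \in Z(\bG)$; so $Z(\bG/\bA) = Z(\bG)/\bA$ and $\bC \subseteq Z(\bG) \cap \bH$. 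Finally, since $Z(\bG)_0 \subseteq \bN$ (\Cref{re:allaboutcenters}\Cref{item:6}) and $\bH \cap \bN = \{e\}$, the subgroup $Z(\bG) \cap \bH$ has trivial identity component; being discrete and containing the full-rank lattice $\bA$, it is a finite-index super-lattice of $\bA$ in $\bH \cong \bR^d$, so every element $z$ of it satisfies $nz \in \bA$ for some $n$. Projecting into the torsion-free discrete group $Z(\bG)/Z(\bG)_0$ (\Cref{re:allaboutcenters}\Cref{item:7})---into which $\bA$ and $Z(\bG) \cap \bH$ both inject (each meets $Z(\bG)_0$ trivially) and in which $\overline{\bA}$ is identified with the sublattice $\{d : nd \in \bA\}$---the super-lattice condition yields $Z(\bG) \cap \bH \subseteq \overline{\bA}$, closing the chain $\bC \subseteq Z(\bG) \cap \bH \subseteq \overline{\bA}$. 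The principal obstacle is the torus/automorphism-group step that forces $\bC$ into $Z(\bG)$; the remaining lattice manipulations are routine.
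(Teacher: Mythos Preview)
Your cyclic chain (a)$\subseteq$(b)$\subseteq$(c)$\subseteq$(a) is organized differently from the paper's proof, which first establishes (b)=(c) via \Cref{pr:1uptoconj} and then treats (a) separately. Your argument for (a)$\subseteq$(b) is more elementary than the paper's: you use torsion-freeness of $\bG$ directly, while the paper deduces $\overline{\bA}\subseteq\bH$ by observing that some minimal Euclidean contains $\overline{\bA}$, that it is also minimal for $\bA$ (same rank), and that all such are conjugate, hence all contain the central $\overline{\bA}$. For the reverse inclusion the paper assumes $\bA$ pure (rather than reducing to discrete $\bA$) and argues that the intersection $\bC$ is a rank-$(d+e)$ closed subgroup of $\bR^{d+e}$ on which $\bG$ acts trivially on the cocompact $\bA$, hence trivially on $\bC$; purity and cocompactness then force $\bA=\bC$. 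This sidesteps your detour through $\bH\cap\bN=\{e\}$ entirely.

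There is a gap in your rank argument for $\bH\cap\bN=\{e\}$: the map $\bA\to\bH/(\bH\cap\bN)$ is injective, but you have not shown its image is \emph{discrete}, and without that the rank bound fails (after all, $\bZ^d$ injects abstractly into $\bR$). The fix is one line via \Cref{le:dilp'}: the image of the central $\bA$ in $\bG/\bN$ is discrete, and since $\bH/(\bH\cap\bN)$ embeds as a closed vector subgroup of the vector group $\bG/\bN$, discreteness transfers. A smaller point: your final projection into $Z(\bG)/Z(\bG)_0$ is unnecessary --- once $z\in Z(\bG)$ satisfies $nz\in\bA$ for some $n$, the very definition of purification gives $z\in\overline{\bA}$ immediately.
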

\begin{proof}
  A number of observations follow.
  \begin{enumerate}[(I)]
  \item {\bf \Cref{item:16} and \Cref{item:17} coincide.} Their very definition makes it plain that \Cref{item:17} can only be larger (since \Cref{item:16} is the intersection of {\it more} groups). On the other hand,
    \begin{equation}\label{eq:arz}
      \bA\cong \bR^d\times \bZ^e
    \end{equation}
    because central subgroups of $\bG$ are free abelian \cite[Theorem 1]{chv-solv}, and any Euclidean group containing it will in turn contain a minimal one, isomorphic to $\bR^{d+e}$. But all such are conjugate by \Cref{pr:1uptoconj}, so we also have \Cref{item:16} $\supseteq$ \Cref{item:17}.    
  \item {\bf \Cref{item:17} $\supseteq$ \Cref{item:15}.} The purification $\overline{\bA}$ of $\bA$ in the center consists of all elements that have finite order modulo $\bA$; what is being claimed here is that any minimal-dimensional Euclidean group that contains $\bA$ in fact contains $\overline{\bA}$.

    Because $\overline{\bA}/\bA$ is torsion, it is finite (every group in sight is finitely-generated). In particular, the two have the same {\it rank} $d+e$: they are both isomorphic to a group of the form $\bR^d\times \bZ^e$.
    
    Some minimal-dimensional $\bR^{d+e}$ will contain $\overline{\bA}$. It is also minimal-dimensional among the Euclidean groups containing just $\bA$, but all of {\it these} are mutually conjugate by \Cref{pr:1uptoconj}. Since $\overline{\bA}$ is central conjugations leave it fixed, so we are done.

  \item {\bf \Cref{item:15} $\supseteq$ \Cref{item:17}.} Or equivalently: if $\bA$ is pure then it coincides with the intersection $\bH$ of \Cref{item:17}.

    As discussed, we have a decomposition \Cref{eq:arz} and minimal Euclidean groups containing $\bA$ are isomorphic to $\bR^{d+e}$. $\bH$ is a closed subgroup of such a group, of equal rank $d+e$; it also carries an adjoint action by $\bG$, which is trivial on the ($\bG$-central) cocompact subgroup $A\le \bH$. But this implies that $\bH$ itself is central, and
    \begin{itemize}
    \item the assumed purity of $\bA\le Z(\bG)$;
    \item together with the cocompactness of $\bA\le \bH$
    \end{itemize}
    imply that in fact $\bA=\bH$.
  \end{enumerate}
  This concludes the proof of the result in its entirety.
\end{proof}

\section{Modes of failure}\label{se:fail}

Consider a connected, simply-connected solvable Lie group $\bH$. We recalled above, in the course of the proof of \Cref{th:solv}, the two-fold criterion given in \cite[\S 4.13, Proposition]{puk} for a primitive ideal of $C^*(\bG)$ to be of type I: there is
\begin{itemize}
\item a {\bf topological} condition, requiring that the ideal be attached to a locally-closed coadjoint orbit;
\item and a {\bf local} condition, requiring that the quotient $\bH_f/\overline{\bH}_f$ be finite for each $f$ on that coadjoint orbit (notation as in \Cref{eq:hfbar}).
\end{itemize}

\cite{dix-revet} gives an example of a connected, simply-connected, solvable Lie group $\bH$, not of type I, and a central copy
\begin{equation*}
  \bZ\cong \bD\subset \bH
\end{equation*}
of the integers such that $\bH/\bD$ {\it is} of type I. An examination of how that example functions will reveal that the quotient has the effect of enlarging the groups $\overline{\bH}_f$ so that the relevant quotients $\bH_f/\overline{\bH}_f$ become finite.

On the other hand, the coadjoint orbits of $\bH$ as a whole (not just those of $\bH/\bD$ in the sense of \Cref{def:orbpert}) are easily seen to all be locally closed. In other words, the example of \cite{dix-revet} operates entirely on the {\it local} type-I condition, leaving the {\it topological} condition unaffected. A first observation is that this is inevitable when taking quotients by central copies of the {\it integers}:

\begin{lemma}\label{le:zalone}
  Let $\bH$ be a connected, simply-connected, solvable Lie group and $\bD\subset \bH$ a discrete central subgroup isomorphic to $\bZ$.

  If $\bH$ has a non-locally-closed coadjoint orbit, then so does $\bH/\bD$. 
\end{lemma}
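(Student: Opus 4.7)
The plan is to show that every non-locally-closed coadjoint $\bH$-orbit $\cO$ can be \emph{rescaled} to an orbit $t\cO$ (for some $t\in\bR^\times$) that is pertinent to $\bH/\bD$ in the sense of \Cref{def:orbpert}. The scaling $f'\mapsto tf'$ is an $\bH$-equivariant homeomorphism of $\fh^*$, so $t\cO$ is again non-locally-closed, and by \Cref{fc:hd}, once $t\cO$ is also pertinent it gives exactly the sort of non-locally-closed orbit of $\bH/\bD$ we seek.

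Fix $f\in\cO$. Nonzero scaling preserves isotropy, so $\bH_{tf,0}=\bH_{f,0}$, and by \Cref{fc:hd} the pertinence of $t\cO$ amounts to $\chi_{tf}$ annihilating $\bD\cap\bH_{f,0}$. The rank-one assumption $\bD\cong\bZ$ enters here: $\bD\cap\bH_{f,0}$ is either trivial---in which case $\cO$ itself is pertinent and we are done with $t=1$---or cyclic on a single generator $d\neq 1$. In the latter case, the map $t\mapsto \chi_{tf}(d)$ is a continuous homomorphism $\bR\to\bS^1$ (continuity following from the smooth dependence of the character $\chi_{f'}$ on the linear functional $f'$, together with the simple-connectedness of $\bH_{f,0}$), and is hence of the form $t\mapsto e^{2\pi i\lambda t}$ for some $\lambda\in\bR$. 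A nonzero solution $t$ to $t\lambda\in\bZ$ exists unconditionally ($t=1/\lambda$ if $\lambda\neq 0$; any nonzero $t$ if $\lambda=0$), yielding the required pertinent non-locally-closed orbit $t\cO$.

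The main technical content is verifying the functional form $t\mapsto e^{2\pi i\lambda t}$ of $t\mapsto\chi_{tf}(d)$, which reduces to unpacking how $\chi_{f'}$ integrates $f'|_{\fh_{f'}}$ on the simply-connected group $\bH_{f,0}$. Beyond this, the rank-one nature of $\bD$ makes the existence of the desired $t$ immediate; the same strategy would break down for higher-rank discrete central $\bD$, where a single scaling would have to simultaneously annihilate several independent characters on a lattice of generators.
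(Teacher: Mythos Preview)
Your argument is correct and matches the paper's proof essentially verbatim: both scale the functional $f$ by a nonzero real so that $\chi_{cf}$ annihilates the (at most cyclic) group $\bD\cap\bH_{f,0}$, using that scaling is $\bH$-equivariant and preserves isotropy and local closedness. Your added detail on why such a $c$ exists---via the $\bR$-valued lift $\phi_f$ of $\chi_f$ on the simply-connected $\bH_{f,0}$, giving $\chi_{tf}(d)=e^{2\pi i t\phi_f(d)}$---is exactly what the paper leaves implicit in its one-line ``we only need to annihilate a generator.''
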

\begin{proof}
  Scaling elements of the dual $\fh^*$ (of the Lie algebra $\fh:=Lie(\bH)$) by non-zero reals is equivariant for the coadjoint action, preserves isotropy groups, orbits up to homeomorphism, etc. In particular, it will turn a non-locally-closed orbit into another such. But in the language of \Cref{fc:hd}, an $f\in \fh^*$ can be scaled by some $c\in \bR^{\times}$ so as to ensure that
  \begin{equation*}
    \chi_f|_{\bD\cap \bH_{f,0}}\equiv 1:
  \end{equation*}
  this is because $\bD\cong \bZ$, so we only need to annihilate a generator thereof.
\end{proof}

\subsection{Correcting for non-smoothness}\label{subse:corr}

The following examples feature prominently in \cite{am}. Following \cite[\S III.2, p.138]{am}:

\begin{definition}\label{def:genmaut}
  Let
  \begin{equation}\label{eq:rhorep}
    \rho:\bR^a\to O(\bR^b)\quad \text{(orthogonal group)} 
  \end{equation}
  be a linear representation such that
  \begin{enumerate}[(a)]
  \item\label{item:1} $\rho(\bR^a)$ is a (possibly non-closed) Lie subgroup isomorphic to $\bT^{a-1}\times \bR$;
  \item\label{item:2} while for any non-zero $\bR^a$-invariant $V\le \bR^b$ the image of $\bR^a$ in $O(\bR^b/V)$ is compact.
  \end{enumerate}
  The semidirect product $\bR^b\rtimes \bR^a$ induced by such an action is the {\it generalized (or extended) Mautner group} $\bM(\rho)$.

  We refer to a representation \Cref{eq:rhorep} meeting the requirements listed above as a {\it Mautner representation}.
\end{definition}

First, for the sake of completeness and because the claim was made in passing in \Cref{re:centonexp}:

\begin{lemma}\label{le:genmaut}
  The center of a generalized Mautner group $\bM(\rho)$ associated to a representation \Cref{eq:rhorep} is
  \begin{equation*}
    \bZ^{a-1}\cong \ker(\rho)\subset \bR^a\subset \bM:=\bM(\rho). 
  \end{equation*}
\end{lemma}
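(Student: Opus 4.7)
The plan is to compute the centrality condition directly from the semidirect product structure, then rule out non-trivial $\bR^b$-components of central elements by combining conditions \Cref{item:1} and \Cref{item:2} in \Cref{def:genmaut}.

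First, I would write an element of $\bM:=\bR^b\rtimes_\rho\bR^a$ as $(v,w)$ with $v\in\bR^b$, $w\in\bR^a$, so that
\begin{equation*}
  (v_1,w_1)(v_2,w_2)=(v_1+\rho(w_1)v_2,\ w_1+w_2).
\end{equation*}
Imposing that $(v,w)$ commute with every $(v',w')$ yields
\begin{equation*}
  v+\rho(w)v'=v'+\rho(w')v\quad\forall v'\in\bR^b,\ w'\in\bR^a.
\end{equation*}
Setting $w'=0$ (and varying $v'$) forces $\rho(w)=\id$, i.e.\ $w\in\ker(\rho)$; setting $v'=0$ (and varying $w'$) forces $v$ to be fixed by all of $\rho(\bR^a)$. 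So $Z(\bM)=(\bR^b)^{\rho(\bR^a)}\times\ker(\rho)$ sitting inside $\bR^b\times\bR^a$.

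The main substantive step is then to show $(\bR^b)^{\rho(\bR^a)}=0$, and this is where \Cref{item:1} and \Cref{item:2} interact. Suppose for contradiction that $0\ne v\in\bR^b$ is $\rho(\bR^a)$-fixed. Then $V:=\bR v$ is a non-zero invariant subspace, on which $\rho$ acts trivially. Since $\rho$ takes values in the orthogonal group $O(\bR^b)$, the orthogonal complement $V^\perp$ is also invariant, and we obtain a $\rho(\bR^a)$-equivariant splitting $\bR^b=V\oplus V^\perp$. The image of $\bR^a$ in $O(V)\times O(V^\perp)$ is thus contained in $\{1\}\times O(V^\perp)$ (using connectedness of $\bR^a$ on the first factor), and condition \Cref{item:2} applied to $V$ forces its projection to $O(V^\perp)$, and hence $\rho(\bR^a)$ itself, to be relatively compact. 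This contradicts condition \Cref{item:1}, which demands that $\rho(\bR^a)\cong\bT^{a-1}\times\bR$ have a non-compact (1-dimensional) factor.

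Finally, knowing $Z(\bM)=\ker(\rho)\subset\bR^a$, its isomorphism type follows immediately from \Cref{item:1}: the short exact sequence
\begin{equation*}
  0\to\ker(\rho)\to\bR^a\to\bT^{a-1}\times\bR\to 0
\end{equation*}
identifies $\ker(\rho)$ as a closed subgroup of $\bR^a$ of rank $a-(a-1)-1+(a-1)=a-1$; being discrete (the quotient map is a local diffeomorphism), it must be $\bZ^{a-1}$. The only even mildly delicate point in the whole argument is the orthogonal-splitting move, which is what allows the compactness conclusion of \Cref{item:2} to be promoted from the quotient $\bR^b/V$ back to $\rho(\bR^a)$ itself.
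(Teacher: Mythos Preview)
Your proof is correct and follows essentially the same approach as the paper's, which argues more tersely that a non-zero $\rho$-fixed vector in $\bR^b$ would ``contradict condition \Cref{item:2}'' and then observes that the projection $Z(\bM)\to\bR^a$ is injective with image in $\ker(\rho)$. Your explicit orthogonal-splitting argument promoting the compactness conclusion from $O(\bR^b/V)$ back to $\rho(\bR^a)$, together with the appeal to condition \Cref{item:1}, just fills in the step the paper compresses into a single clause.
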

\begin{proof}
  That $\ker(\rho)$ is contained in the center is immediate, as is the fact that it must be free abelian of rank $a-1$ (by condition \Cref{item:1} of \Cref{def:genmaut}). We thus need the converse.

  The center $Z(\bM)$ intersects $\bR^b\le \bM$ trivially: any non-zero central element would span a line pointwise-fixed under $\rho$, contradicting condition \Cref{item:2} of \Cref{def:genmaut}. It thus follows that
  \begin{equation*}
    Z(\bM)\subset \bM\to \bR^a
  \end{equation*}
  is bijective onto its image. That image must consist of elements annihilated by $\rho$, and we are done.
\end{proof}

The importance of the generalized Mautner groups stems from the fact that in a sense they are emblematic of solvable groups for which the action on the spectrum of the nilradical fails to be smooth \cite[\S III.2, Theorem 2]{am}. More specifically, they are tailor-made to have the ``dangling'' copy of $\bR$ in the image $\bT^{a-1}\times \bR$ of \Cref{def:genmaut} \Cref{item:1} wrap around a torus densely. The ``bad orbits'' (as they are referred to in \cite[\S III.3]{am}, for instance) come about via this wrapping.

One way to restore the type-I property is to correct for the ill-behaved orbits by tracing them over with an action by a higher-dimensional torus:

\begin{definition}\label{def:compmaut}
  Consider a representation \Cref{eq:rhorep} with its associated extended Mautner group
  \begin{equation*}
    \bM(\rho) = \bR^b\rtimes \bR^a,
  \end{equation*}
  as in \Cref{def:genmaut}. The closure of the image $\rho(\bR^a)$ is an $d$-torus for some $d\ge a+1$, which in turn induces an action
  \begin{equation}\label{eq:rho'}
    \rho':\bR^{d}\to O(\bR^b)
  \end{equation}
  of $\bR^d$ on the same vector space $\bR^b$. Note that the images of $\rho$ and $\rho'$ commute pointwise.

  The {\it complete} Mautner group $\overline{\bM}:=\overline{\bM}(\rho)$ is the semidirect product
  \begin{equation*}
    \bM(\rho)\rtimes \bR^{d} \cong (\bR^b\rtimes \bR^a)\rtimes \bR^{d},
  \end{equation*}
  where $\bR^{d}$ acts trivially on the $\bR^a$ factor and via $\rho'$ on $\bR^b$.
\end{definition}

The upshot of the discussion above, on ``tracing over'' orbits, is that that process is sufficient to undo the type-I pathology.

\begin{proposition}\label{pr:compmaut1}
  Given an action $\rho$ as in \Cref{def:genmaut}, the complete Mautner group $\overline{\bM}:=\overline{\bM}(\rho)$ is of type I.
\end{proposition}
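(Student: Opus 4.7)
The plan is to apply Mackey's normal-subgroup analysis (e.g. \cite[Chapter I, Proposition 10.4]{am}) to the abelian normal subgroup $\bN:=\bR^b\trianglelefteq \overline{\bM}$. I first rewrite $\overline{\bM}=(\bR^b\rtimes \bR^a)\rtimes \bR^d$ more symmetrically. Because $\bR^d$ commutes with $\bR^a$ (by construction in \Cref{def:compmaut}) and acts on $\bR^b$ via the representation $\rho'$ which commutes pointwise with $\rho$, one obtains an isomorphism
\begin{equation*}
  \overline{\bM}\cong \bR^b \rtimes (\bR^a\times\bR^d),
\end{equation*}
where $(s,t)\in\bR^a\times\bR^d$ acts on $v\in\bR^b$ via $\rho(s)\rho'(t)\,v$. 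The key structural point is that by the very choice of $\rho'$ in \Cref{eq:rho'}, the image $\rho'(\bR^d)$ is the closure of $\rho(\bR^a)$ and equals the torus $\bT^d\le O(\bR^b)$; hence $\rho(\bR^a)\subseteq \bT^d$ and the entire combined action of $\bR^a\times \bR^d$ on $\bR^b$ factors through the compact group $\bT^d$.

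Next I check the two inputs of the Mackey machine for $\bN=\bR^b\trianglelefteq \overline{\bM}$. Pontryagin duality identifies $\widehat{\bN}$ with $\bR^b$, and the induced (contragredient) action of $\bR^a\times\bR^d$ on $\widehat{\bN}$ again factors through the compact torus $\bT^d$. Consequently every $(\bR^a\times\bR^d)$-orbit on $\widehat{\bN}$ is the continuous image of $\bT^d$, hence compact, hence closed in the Hausdorff space $\widehat{\bN}$, and in particular locally closed. This already shows that $\bN$ is regularly embedded in $\overline{\bM}$ (via e.g.\ \cite[Theorem 1]{glm}).

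For the second input, fix $\chi\in \widehat{\bN}$ and consider the stabilizer $H_\chi\le \bR^a\times\bR^d$. As a closed subgroup of a Euclidean group, $H_\chi$ is isomorphic to $\bR^k\times\bZ^l$ for some $k,l$, and is in particular a compactly generated locally compact abelian group. The Mackey obstruction $\alpha_\chi\in H^2(H_\chi,\bS^1)$ therefore falls squarely under the hypotheses of \Cref{th:cpctgenab}, which gives that $(H_\chi,\alpha_\chi)$ is of type I (the ``robust'' strengthening is not needed here, only the initial type-I assertion). Assembling these two facts via the Mackey decomposition then yields that $\overline{\bM}$ itself is of type I.

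There is no single hard step here: the content is really the structural observation that forming $\overline{\bM}$ replaces the pathological, non-closed image $\rho(\bR^a)\cong \bT^{a-1}\times \bR$ of \Cref{def:genmaut}\Cref{item:1} by its compact closure $\bT^d$, thereby ``tracing over'' the bad orbits on $\widehat{\bN}$ that were preventing smoothness in $\bM(\rho)$ itself. The one point to verify with a little care is precisely that the combined $(\bR^a\times\bR^d)$-action factors through a single compact torus; everything else is then a routine application of the Mackey machine together with \Cref{th:cpctgenab}.
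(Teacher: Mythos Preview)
Your approach is essentially the paper's: rewrite $\overline{\bM}$ as $\bR^b\rtimes(\bR^a\times\bR^d)$, observe that the combined action factors through the compact torus $\bT^d$ so that orbits on $\widehat{\bR^b}$ are compact, and invoke the Mackey machine. That part is fine.

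There is, however, a gap in the stabilizer step. You invoke \Cref{th:cpctgenab} to conclude that $(H_\chi,\alpha_\chi)$ is of type I, reading that theorem as an unconditional type-I assertion. It is not: as its proof makes explicit (``We are assuming that the central extension \ldots attached to $\alpha$ is of type I, and seek to prove the same of $\bE_\alpha$''), \Cref{th:cpctgenab} \emph{assumes} $(\bA,\alpha)$ is type I and concludes robustness. The unconditional statement is in fact false: for $\bA=\bZ^2$ and $\alpha$ an irrational-rotation class, $\bA_\alpha$ is not of type I. So the appeal to \Cref{th:cpctgenab} does not do the work you want.

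The repair is easy and is what the paper's terse citation of \cite[Theorem 3.12]{mack-unit} is implicitly using: because $\overline{\bM}\cong \bR^b\rtimes(\bR^a\times\bR^d)$ is a genuine \emph{semidirect} product with abelian kernel, every character $\chi\in\widehat{\bR^b}$ extends to the little group $\bR^b\rtimes H_\chi$ via $(n,h)\mapsto \chi(n)$ (well-defined precisely because $h\in H_\chi$ fixes $\chi$). Hence the Mackey obstruction $\alpha_\chi$ is trivial, and $(H_\chi,\alpha_\chi)=(H_\chi,0)$ is type I simply because $H_\chi$ is abelian. With that correction your argument goes through and matches the paper's.
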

\begin{proof}

  
  This is an immediate application of Mackey theory (\cite[Theorem 3.12]{mack-unit}, \cite[Chapter I, Proposition 10.4]{am}, etc.): the mutually-commuting representations
  \begin{equation*}
    \rho:\bR^a\to O(\bR^b)
    \quad\text{and}\quad
    \rho':\bR^{d}\to O(\bR^b)
  \end{equation*}
  of \Cref{def:genmaut} and \Cref{def:compmaut} aggregate into a single
  \begin{equation}\label{eq:bigrho}
    \rho:\bR^a\times \bR^{d}\to O(\bR^b),
  \end{equation}
  which then realizes $\overline{\bM}$ as a semidirect product
  \begin{equation*}
    \overline{\bM}\cong \bR^b\rtimes (\bR^a\times \bR^d). 
  \end{equation*}
  We are done, by \cite[Theorem 3.12]{mack-unit}, as soon as we observe that $\bR^b\le \overline{\bM}$ is regularly embedded: by construction, the image of \Cref{eq:bigrho} coincides with that of only the $\bR^d$ factor, which in turn is a torus. This means that the orbits of the action of $\overline{\bM}$ on $\widehat{\bR^b}$ are in fact compact, hence the conclusion.
\end{proof}

\Cref{pr:compmaut1} allows for natural examples of non-type-I (connected, simply-connected, solvable) Lie groups whose semidirect products with tori are of type I.

\begin{example}\label{ex:adm1}
  Consider a representation \Cref{eq:rhorep}, with its resulting extended and complete Mautner groups
  \begin{equation*}
    \bM:=\bM(\rho)
    \quad\text{and}\quad
    \overline{\bM}:=\overline{\bM}(\rho)
  \end{equation*}
  respectively, as in \Cref{def:genmaut,def:compmaut}.

  We know that $\bM$ itself is not of type I, as explained in \cite[\S III.2]{am} (via \cite[Chapter II, Corollary to Theorem 9]{am}). On the other hand, $\overline{\bM}$ is of type I by \Cref{pr:compmaut1}, hence so is its quotient by the kernel $\bZ^{d}$ of \Cref{eq:rho'}:
  \begin{equation*}
    \overline{\bM}/\bZ^{d} \cong \bM\rtimes (\bR^{d}/\bZ^{d})\cong \bM\rtimes \bT^{d}. 
  \end{equation*}
\end{example}

\begin{remark}
  The type-I group $\overline{\bM}\rtimes \bT^{d}$ is {\it admissible} in the sense of \cite[Chapter IV, Appendix]{ak}: a semidirect product of a connected, simply-connected solvable group $\bM$ by a compact abelian group acting faithfully on $\bM$ and trivially on $\bM/N(\bM)$.
  
  \cite[Chapter V, Introduction]{ak} mentions in passing that examples of non-type-I connected, simply-connected, solvable $\bG$ exist so that an admissible $\bG\rtimes \bK$ is of type I; \Cref{ex:adm1} is precisely that.
\end{remark}



\begin{thebibliography}{10}

 
\bibitem{ak}
L.~Auslander and B.~Kostant.
\newblock Polarization and unitary representations of solvable {L}ie groups.
\newblock {\em Invent. Math.}, 14:255--354, 1971.

\bibitem{am}
Louis Auslander and Calvin~C. Moore.
\newblock Unitary representations of solvable {L}ie groups.
\newblock {\em Mem. Amer. Math. Soc.}, 62:199, 1966.

\bibitem{bekka-count}
Bachir Bekka.
\newblock The {P}lancherel formula for countable groups.
\newblock {\em Indag. Math. (N.S.)}, 32(3):619--638, 2021.

\bibitem{be}
Bachir Bekka and Siegfried Echterhoff.
\newblock On unitary representations of algebraic groups over local fields.
\newblock {\em Represent. Theory}, 25:508--526, 2021.

\bibitem{brl}
Armand Borel.
\newblock {\em Linear algebraic groups}, volume 126 of {\em Graduate Texts in
  Mathematics}.
\newblock Springer-Verlag, New York, second edition, 1991.

\bibitem{bourb-lie-13}
Nicolas Bourbaki.
\newblock {\em Lie groups and {L}ie algebras. {C}hapters 1--3}.
\newblock Elements of Mathematics (Berlin). Springer-Verlag, Berlin, 1998.
\newblock Translated from the French, Reprint of the 1989 English translation.

\bibitem{br-coh}
Kenneth~S. Brown.
\newblock {\em Cohomology of groups}, volume~87 of {\em Graduate Texts in
  Mathematics}.
\newblock Springer-Verlag, New York, 1994.
\newblock Corrected reprint of the 1982 original.

\bibitem{chv-solv}
Claude Chevalley.
\newblock On the topological structure of solvable groups.
\newblock {\em Ann. of Math. (2)}, 42:668--675, 1941.

\bibitem{chv2}
Claude Chevalley.
\newblock {\em Th\'{e}orie des groupes de {L}ie. {T}ome {II}. {G}roupes
  alg\'{e}briques}.
\newblock Actualit\'{e}s Scientifiques et Industrielles [Current Scientific and
  Industrial Topics], No. 1152. Hermann \& Cie, Paris, 1951.

\bibitem{chv1}
Claude Chevalley.
\newblock {\em Theory of {L}ie groups. {I}}, volume~8 of {\em Princeton
  Mathematical Series}.
\newblock Princeton University Press, Princeton, NJ, 1999.
\newblock Fifteenth printing, Princeton Landmarks in Mathematics.

\bibitem{de}
Anton Deitmar and Siegfried Echterhoff.
\newblock {\em Principles of harmonic analysis}.
\newblock Universitext. Springer, Cham, second edition, 2014.

\bibitem{dix-nil-1}
J.~Dixmier.
\newblock Sur les repr\'{e}sentations unitaires des groupes de {L}ie
  nilpotents. {I}.
\newblock {\em Amer. J. Math.}, 81:160--170, 1959.

\bibitem{dix-revet}
Jacques Dixmier.
\newblock Sur le rev\^{e}tement universel d'un groupe de {L}ie de type {I}.
\newblock {\em C. R. Acad. Sci. Paris}, 252:2805--2806, 1961.

\bibitem{dixc}
Jacques Dixmier.
\newblock {\em {$C\sp*$}-algebras}.
\newblock North-Holland Mathematical Library, Vol. 15. North-Holland Publishing
  Co., Amsterdam-New York-Oxford, 1977.
\newblock Translated from the French by Francis Jellett.

\bibitem{folland}
Gerald~B. Folland.
\newblock {\em A course in abstract harmonic analysis}.
\newblock Textbooks in Mathematics. CRC Press, Boca Raton, FL, second edition,
  2016.

\bibitem{glm}
James Glimm.
\newblock Locally compact transformation groups.
\newblock {\em Trans. Amer. Math. Soc.}, 101:124--138, 1961.

\bibitem{gk}
Elliot~C. Gootman and Robert~R. Kallman.
\newblock The left regular representation of a {$p$}-adic algebraic group is
  type {${\rm I}$}.
\newblock In {\em Studies in algebra and number theory}, volume~6 of {\em Adv.
  in Math. Suppl. Stud.}, pages 273--284. Academic Press, New York-London,
  1979.

\bibitem{helg}
Sigurdur Helgason.
\newblock {\em Differential geometry, {L}ie groups, and symmetric spaces},
  volume~34 of {\em Graduate Studies in Mathematics}.
\newblock American Mathematical Society, Providence, RI, 2001.
\newblock Corrected reprint of the 1978 original.

\bibitem{kal2}
Robert~R. Kallman.
\newblock Certain topological groups are type {I}. {II}.
\newblock {\em Advances in Math.}, 10:221--255, 1973.

\bibitem{kap}
Irving Kaplansky.
\newblock {\em Infinite abelian groups}.
\newblock University of Michigan Press, Ann Arbor, 1954.

\bibitem{mack-unit}
George~W. Mackey.
\newblock {\em The theory of unitary group representations}.
\newblock Chicago Lectures in Mathematics. University of Chicago Press,
  Chicago, Ill.-London, 1976.
\newblock Based on notes by James M. G. Fell and David B. Lowdenslager of
  lectures given at the University of Chicago, Chicago, Ill., 1955.

\bibitem{milnor-morse}
J.~Milnor.
\newblock {\em Morse theory}.
\newblock Annals of Mathematics Studies, No. 51. Princeton University Press,
  Princeton, N.J., 1963.
\newblock Based on lecture notes by M. Spivak and R. Wells.

\bibitem{mz}
Deane Montgomery and Leo Zippin.
\newblock {\em Topological transformation groups}.
\newblock Robert E. Krieger Publishing Co., Huntington, N.Y., 1974.
\newblock Reprint of the 1955 original.

\bibitem{moore-ext}
Calvin~C. Moore.
\newblock Extensions and low dimensional cohomology theory of locally compact
  groups. {I}, {II}.
\newblock {\em Trans. Amer. Math. Soc.}, 113:40--63; ibid. 113 (1964), 64--86,
  1964.

\bibitem{ped-aut}
Gert~K. Pedersen.
\newblock {\em {$C^*$}-algebras and their automorphism groups}.
\newblock Pure and Applied Mathematics (Amsterdam). Academic Press, London,
  2018.
\newblock Second edition of [ MR0548006], Edited and with a preface by S\o ren
  Eilers and Dorte Olesen.

\bibitem{puk}
Lajos Puk\'{a}nszky.
\newblock {\em Characters of connected {L}ie groups}, volume~71 of {\em
  Mathematical Surveys and Monographs}.
\newblock American Mathematical Society, Providence, RI, 1999.
\newblock With a preface by J. Dixmier and M. Duflo.

\bibitem{rob-gr}
Derek J.~S. Robinson.
\newblock {\em A course in the theory of groups}, volume~80 of {\em Graduate
  Texts in Mathematics}.
\newblock Springer-Verlag, New York, second edition, 1996.

\bibitem{rot}
Joseph~J. Rotman.
\newblock {\em An introduction to homological algebra}.
\newblock Universitext. Springer, New York, second edition, 2009.

\bibitem{steen}
Norman Steenrod.
\newblock {\em The topology of fibre bundles}.
\newblock Princeton Landmarks in Mathematics. Princeton University Press,
  Princeton, NJ, 1999.
\newblock Reprint of the 1957 edition, Princeton Paperbacks.

\bibitem{thoma}
Elmar Thoma.
\newblock \"{U}ber unit\"{a}re {D}arstellungen abz\"{a}hlbarer, diskreter
  {G}ruppen.
\newblock {\em Math. Ann.}, 153:111--138, 1964.

\bibitem{tt}
Fabio~Elio Tonti and Asger Törnquist.
\newblock A short proof of {T}homa's theorem on type {I} groups, 2019.
\newblock arXiv:1904.08313.

\end{thebibliography}

\addcontentsline{toc}{section}{References}

\Addresses

\end{document}